\titlespacing{\paragraph}{0em}{0em}{0.5em}
\titlespacing{\subparagraph}{0em}{0em}{0.5em}
\theoremstyle{plain}
\newtheorem{theorem}{Theorem}[section]
\newtheorem{proposition}[theorem]{Proposition}
\newtheorem{corollary}[theorem]{Corollary}
\newtheorem{lemma}[theorem]{Lemma}
\theoremstyle{definition}
\newtheorem{remark}[theorem]{Remark}
\numberwithin{table}{section}
\newcommand{\normal}{\mathrel{\triangleleft}}
\newcommand{\Z}{\mathbb{Z}}
\newcommand{\Aut}{\mathrm{Aut}}
\newcommand{\Inn}{\mathrm{Inn}}
\newcommand{\Out}{\mathrm{Out}}
\newenvironment{arabiclist}{%
	\begin{enumerate}
	}{%
	\end{enumerate}}
\numberwithin{equation}{section}
\setlist[enumerate]{label=\arabic*.,font=\upshape, ref=(\arabic*)}
\begin{document}

\title{On the structure of a poly-$\Z$ group}
\author{Madeline Weinstein}
\address{Northeastern University \\ 360 Huntington Ave, Boston, MA 02115 \\ weinstein.ma@northeastern.edu}
\date{14 January 2021} \subjclass{20F28} \keywords{}
\maketitle

\begin{abstract}
    In this paper we study a certain class of polycyclic groups. We outline a method for constructing a poly-$\Z$ group $G_n$ by describing a process for selecting maps that are used to extend $G_i$ to $G_{i+1}$ for $1 \leq i \leq n-1$ and describe the multiplicative structure and automorphism groups of some poly-$\Z$ groups up to $G_3$.
\end{abstract}

\section{Introduction}
A \textit{polycyclic group} is a group $G$ with a series of normal subgroups
\[
    \{1\} = G_0 \normal G_1 \normal \dots \normal G_n = G
\]
where for all $1 \leq i \leq n$, $G_{i-1} \normal G_i$ and $G_i / G_{i-1}$ is cyclic. Such a series $\{G_0, \dots, G_n\}$ is called a \textit{polycyclic series} of $G$ and $G$ is called an \textit{$n$-step} polycyclic group. Let $g_1, \dots , g_n$ be a sequence of elements in $G$ such that $\langle g_i G_{i-1} \rangle =  G_i / G_{i-1}$ for all $1 \leq i \leq n$. Then $G = \langle g_1, \dots, g_n \rangle$ where each $G_i = \langle g_i, G_{i-1} \rangle$ and $\{g_1, \dots , g_n\}$ is called a \textit{polycyclic generating sequence} of $G$. The \textit{relative order} of $g_i$ for $1 \leq i \leq n$ is given by $o_i = |G_i / G_{i-1}| = |g_i G_{i-1}|$.

An element of the group $G$ is called a \textit{word}, and a word written in the form $g_1^{e_1} \dots g_n^{e_n}$ where $e_i \in \Z$ such that $e_i < o_i$ for $1 \leq i \leq n$ is called a \textit{normal word}. Every polycyclic group has a presentation called a \textit{polycyclic presentation}, written in form
\begin{equation}
	\left \langle
		g_1, \dots, g_n \middle\vert \begin{array}{ll} g_jg_ig_j^{-1} = u_{i, j} & 1 \leq i < j \leq n \\
		g_j^{-1}g_ig_j = v_{i, j} & 1 \leq i < j \leq n \\
		g_i^{o_i} = w_i & 1 \leq i \leq n, o_i < \infty \end{array}
	\right \rangle \label{pcpres}
\end{equation}
where $u_{i, j}$ and $v_{i, j}$ are normal words in the generators $g_1, \dots, g_{j-1}$, and $w_i$ is a normal word in the generators $g_1, \dots, g_{i-1}$. In a \textit{consistent} polycyclic presentation, any element in $G$ can be written uniquely as a normal word by iteratively applying the relations defined in the presentation.

Every polycyclic group has a consistent polycyclic presentation \cite{heo}, so it follows that their word problem (determining whether or not two words in a group are equal) is solvable. The \textit{collection algorithm} provides one method for obtaining such a solution; the unique normal word for a given arbitrary word is computed by iteratively applying the power and conjugacy relations of the group to subwords of the given word. The \textit{Hirsch length} of a polycyclic group is the number of infinite quotient groups $G_i / G_{i-1}$. An $n$-step \textit{poly-$\Z$ group} is an $n$-step polycyclic group with Hirsch length $n$, where $G_i / G_{i-1} \cong \Z$ for all $1 \leq i \leq n$.

In symmetric key cryptography schemes, a number of group-theoretic key exchange protocols rely on conjugation in a given group to retain secrecy. As polycyclic groups are finitely presented and have solvable word, conjugacy, and isomorphism decision problems, they are suitable platform groups for cryptosystems provided that their conjugacy problem is sufficiently difficult \cite{ek}, \cite{gk}. In 2014 Cavallo and Kahrobaei proposed a polycyclic group to be used as a platform group for a secure key exchange protocol, relying on reducing the NP-complete twisted subset sum problem to the conjugacy problem in that group to prove that the group would be sufficiently secure \cite{ck1}. The authors later issued an erratum stating that the group's construction was flawed such that their main result was invalidated, as not all of the maps they used to extend $G_i$ to $G_{i+1}$ were automorphisms \cite{ck2}.

Initially, we began this project with the intent to propose a new polycyclic group such that a reduction exists from the twisted subset sum problem to its conjugacy problem. However, while we found references focusing on automorphism groups of finite cyclic groups (see \cite{bc}, \cite{bcm}, \cite{zl}), we found that descriptions of infinite polycyclic groups and their automorphism groups were largely missing from existing literature. The purpose of this paper is to study and describe poly-$\Z$ groups, a specific class of polycyclic groups. We begin by discussing a method of constructing poly-$\Z$ groups in \Cref{s_pzgroups} and in \Cref{s_12stepgroups}, we describe all $1$- and $2$- step poly-$\Z$ groups and discuss their multiplicative and automorphism group structure. Our main result is the set of descriptions of all $3$-step poly-$\Z$ groups up to isomorphism having a specific $2$-step poly-$\Z$ group as a base (Theorems \ref{b1_automorphisms}, \ref{a0_automorphisms}, \ref{a1_automorphisms}, \ref{b0_automorphisms}).

\section{Construction of poly-$\Z$ groups} \label{s_pzgroups}
Let $H$ be a group with identity $1_{H}$. For each $\varphi \in \Aut(H)$, we define $\overline{\varphi} \colon \Z \to \Aut(H)$ as a homomorphism such that $\overline{\varphi}(k) = \varphi^k$ for all $k \in \Z$. Set $G = H \rtimes_{\overline{\varphi}} \Z$. Then $G$ is a group with multiplication defined as
\[
    (h_1,k_1)(h_2,k_2) = (h_1 \varphi^{k_1}(h_2), k_1+k_2)
\]
for $h_1,h_2 \in H$ and $k_1,k_2 \in \Z$. If we identify $H$ with the subgroup $\{ (h, 0) \mid h \in H\}$ of $G$ where $h \in H$ corresponds to $(h, 0) \in G$ and set $g = (1_H, 1)$, then
\[
    G = \{ hg^k \mid h \in H, k \in \Z \} \quad \text{ and } \quad g^khg^{-k} = \varphi^k(h)
\]
for all $h \in H$ and $k \in \Z$. Then $H \normal G$ with $G/H = \langle gH \rangle \cong \Z$.

\Cref{inn_isomorph} is a particular case of a remark from \cite[\S 1.6]{cs} that gives a sufficient condition for two semidirect products to be isomorphic. The proof of the remark was not readily available, so we give a proof for the convenience of the reader using \Cref{A*}.
\begin{lemma}\label[lemma]{A*}
    Let $H$ be a group with $\alpha \in \Aut(H)$ and $a \in H$. Set 
    \[
        A_k \coloneqq
        \begin{cases}
            e &\text{ if } k = 0 \\
            a \alpha(a) \dots \alpha^{k-1}(a) &\text{ if } k>0 \\
            \alpha^{-1}(a^{-1}) \dots \alpha^k(a^{-1}) &\text{ if } k<0.
        \end{cases}
    \]
    Then for all $k, k_1, k_2 \in \Z$, the following properties hold:
    \begin{arabiclist}
        \item $A_k = \alpha^{k}(A_{-k}^{-1})$ \label{*.1} 
        \item $A_{k_1}^{-1}A_{k_1+k_2} = \alpha^{k_1}(A_{k_2})$. \label{*.2}
    \end{arabiclist}
\end{lemma}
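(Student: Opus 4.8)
The plan is to recognize that $(A_k)_{k \in \Z}$ is exactly the sequence of first coordinates of the powers of $(a,1)$ inside the semidirect product $H \rtimes_{\overline{\alpha}} \Z$, and to observe that both identities are then formal consequences of the group axioms. Concretely, I would first isolate a single unifying recurrence valid for \emph{every} $k \in \Z$, namely
\[
    A_{k+1} = A_k\,\alpha^k(a).
\]
For $k \geq 0$ this is immediate from the product form defining $A_k$ and $A_{k+1}$. For $k < 0$ it requires unwinding the $k<0$ branch of the definition, and the only delicate points are the transition across the sign boundary, $A_0 = A_{-1}\alpha^{-1}(a)$, and the general negative step; both are handled by cancelling a factor $\alpha^{j}(a^{-1})\alpha^{j}(a) = \alpha^j(a^{-1}a) = e$. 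Establishing this recurrence uniformly over $\Z$ is really the crux of the bookkeeping.

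With the recurrence in hand, I would prove \ref{*.2} by induction on $k_2$ with $k_1$ fixed, running the induction in both directions so as to cover all of $\Z$. The base case $k_2 = 0$ is the tautology $A_{k_1}^{-1}A_{k_1} = e = \alpha^{k_1}(A_0)$. For the forward step, applying the recurrence at index $k_1+k_2$ on the left, invoking the inductive hypothesis, and then using that $\alpha^{k_1}$ is a homomorphism gives
\[
    A_{k_1}^{-1}A_{k_1+k_2+1} = \alpha^{k_1}(A_{k_2})\,\alpha^{k_1+k_2}(a) = \alpha^{k_1}\!\bigl(A_{k_2}\,\alpha^{k_2}(a)\bigr) = \alpha^{k_1}(A_{k_2+1}),
\]
where the last step is the recurrence at index $k_2$; the backward step $k_2 \mapsto k_2-1$ is symmetric. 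Finally, \ref{*.1} drops out as the special case $k_2 = -k_1$ of \ref{*.2}: there the left side collapses to $A_{k}^{-1} = \alpha^{k}(A_{-k})$, and inverting both sides (again using that $\alpha^k$ respects inverses) yields $A_k = \alpha^k(A_{-k}^{-1})$.

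I expect no conceptual difficulty here; the main obstacle is purely the sign-convention bookkeeping in verifying $A_{k+1} = A_k\alpha^k(a)$ for negative $k$ and in keeping the three-way case definition of $A_k$ straight. An alternative that sidesteps the case analysis is to prove by a single induction on $|k|$ that $(a,1)^k = (A_k, k)$ in $H \rtimes_{\overline{\alpha}} \Z$; then \ref{*.2} is exactly the first-coordinate identity extracted from $(a,1)^{k_1+k_2} = (a,1)^{k_1}(a,1)^{k_2}$, and \ref{*.1} from $(a,1)^{-k} = \bigl((a,1)^k\bigr)^{-1}$. I would likely present the recurrence-based argument as the main proof and remark on this semidirect-product interpretation as the conceptual source of both identities.
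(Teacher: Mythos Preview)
Your argument is correct and takes a genuinely different route from the paper. The paper proceeds by brute-force case analysis: it verifies \ref{*.1} directly for $k=0$, $k>0$, $k<0$, and then establishes \ref{*.2} by running through all sign combinations of $k_1$, $k_2$, and $k_1+k_2$ (using \ref{*.1} for the case $k_1+k_2=0$). You instead isolate the single recurrence $A_{k+1}=A_k\,\alpha^k(a)$ valid across all of $\Z$, prove \ref{*.2} by a two-directional induction on $k_2$, and then read off \ref{*.1} as the special case $k_2=-k_1$. Your approach is cleaner and more conceptual: the recurrence absorbs all the sign bookkeeping into one verification, the induction replaces six separate computations, and the semidirect-product remark explains \emph{why} both identities hold. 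The paper's approach, by contrast, requires no setup and each case is a one-line manipulation, so it is arguably more transparent to check line by line; it also keeps the logical dependence in the order \ref{*.1} $\Rightarrow$ \ref{*.2}, whereas you reverse it.
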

    
\begin{proof}
    By definition of $A_k$, it is immediately clear that \ref{*.1} holds for $k=0$. Now observe that for $k>0$,
    \[
        A_k
        = a \alpha(a) \dots \alpha^{k-1}(a)
        = \alpha^k(\alpha^{-k}(a) \dots \alpha^{-1}(a))
        = \alpha^k(A_{-k}^{-1})
    \]
    and for $k<0$,
    \[
        A_k
        = \alpha^{-1}(a^{-1}) \dots \alpha^k(a^{-1})
        = \alpha^k(\alpha^{-k-1}(a^{-1}) \dots \alpha(a^{-1}) a^{-1})
        = \alpha^k(A_{-k}^{-1}).
    \]
        
    Now, consider all cases of the signs of integers $k_1$ and $k_2$. It is immediately clear that \ref{*.2} holds when either $k_1$ or $k_2$ is $0$. For any $k_1 \in \Z$, if $k_1 + k_2 = 0$ then $k_2 = -k_1$ and \ref{*.2} is true by \ref{*.1}. For $k_1, k_2 > 0$, we have
    \[
        A_{k_1}^{-1}A_{k_1+k_2}
        = \alpha^{k_1}(a) \dots \alpha^{k_1+k_2-1}(a)
        = \alpha^{k_1}(A_{k_2}).
    \]
    If $k_1 > 0$ and $k_2 < 0$, it remains to be shown that \ref{*.2} is true for $k_1 + k_2 > 0$ and $k_1 + k_2 < 0$. In either case,
    \[
        A_{k_1}^{-1}A_{k_1+k_2}
        = \alpha^{k_1-1}(a^{-1}) \dots \alpha^{k_1+k_2}(a^{-1})
        = \alpha^{k_1}(A_{k_2}).
    \]
    Similarly, if $k_1 < 0$ and $k_2 > 0$, then whether $k_1 + k_2$ is greater than or less than $0$,
    \[
        A_{k_1}^{-1}A_{k_1+k_2}
        = \alpha^{k_1}(a) \dots \alpha^{k_1+k_2-1}(a)
        = \alpha^{k_1}(A_{k_2}).
    \]
    Finally, for $k_1, k_2 < 0$ we have
    \[
        A_{k_1}^{-1}A_{k_1+k_2}
        = \alpha^{k_1-1}(a^{-1}) \dots \alpha^{k_1+k_2}(a^{-1})
        = \alpha^{k_1}(A_{k_2}).
    \]
\end{proof}

\begin{proposition}\label[proposition]{inn_isomorph}
    Let $H$ be a group and let $\alpha, \beta \in \Aut(H)$ such that there exists $\iota \in \Inn(H)$ such that $\beta = \iota \circ \alpha$. Then
    \[
        H \rtimes_{\overline{\beta}} \Z \cong H \rtimes_{\overline{\alpha}} \Z.
    \]
    In particular, $H \rtimes_{\overline{\iota}} \Z \cong H \times \Z$ for all $\iota \in \Inn(H)$.
\end{proposition}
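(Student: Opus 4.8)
The plan is to exhibit an explicit isomorphism $\Phi \colon H \rtimes_{\overline{\beta}} \Z \to H \rtimes_{\overline{\alpha}} \Z$. Since $\iota \in \Inn(H)$, there is an element $a \in H$ with $\iota(h) = a h a^{-1}$ for all $h \in H$, so the hypothesis $\beta = \iota \circ \alpha$ reads $\beta(h) = a\,\alpha(h)\,a^{-1}$. The guiding observation is that inside $H \rtimes_{\overline{\alpha}} \Z$ the element $t \coloneqq (a, 1)$ conjugates the copy of $H$ exactly by $\beta$: a short computation gives $t\,(h,0)\,t^{-1} = (a\,\alpha(h)\,a^{-1}, 0) = (\beta(h), 0)$. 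Thus $t$ together with $H$ plays, in $H \rtimes_{\overline{\alpha}} \Z$, the same structural role that $g = (1_H, 1)$ together with $H$ plays in $H \rtimes_{\overline{\beta}} \Z$. This suggests defining $\Phi$ to fix $H$ pointwise and to send the $\Z$-generator $g$ to $t$.

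To make this precise I would first compute the powers of $t$. Using the sequence $A_k$ from \Cref{A*} (for this choice of $\alpha$ and $a$), an easy induction shows $t^k = (A_k, k)$ for all $k \in \Z$, so that, writing a general element of $H \rtimes_{\overline{\beta}} \Z$ as $(h,k)$, the prescription ``$H$ identically, $g \mapsto t$'' becomes
\[
    \Phi\big((h,k)\big) = (h,0)\,t^{k} = (h A_k, k).
\]
The main work is then checking that $\Phi$ is a homomorphism. Expanding both $\Phi\big((h_1,k_1)(h_2,k_2)\big)$ and $\Phi\big((h_1,k_1)\big)\,\Phi\big((h_2,k_2)\big)$ and cancelling the common left factor $h_1$, the identity to be verified collapses to
\[
    \beta^{k_1}(h_2)\,A_{k_1+k_2} = A_{k_1}\,\alpha^{k_1}(h_2)\,\alpha^{k_1}(A_{k_2}).
\]
Here two facts combine: first, the conjugation identity $\beta^{k}(h) = A_k\,\alpha^{k}(h)\,A_k^{-1}$, valid for all $k \in \Z$ (immediate by induction from $\beta(h)=a\,\alpha(h)\,a^{-1}$ for $k \geq 0$, and by the analogous computation for $\beta^{-1}$ when $k<0$); and second, property \ref{*.2} of \Cref{A*}, namely $A_{k_1}^{-1}A_{k_1+k_2} = \alpha^{k_1}(A_{k_2})$. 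Substituting the first into the left-hand side turns it into $A_{k_1}\,\alpha^{k_1}(h_2)\,A_{k_1}^{-1}A_{k_1+k_2}$, and the second rewrites the trailing factor as $\alpha^{k_1}(A_{k_2})$, matching the right-hand side exactly.

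It then remains to observe that $\Phi$ is a bijection, which is clear from its explicit form: for each fixed $k$ the map $h \mapsto h A_k$ is a bijection of $H$, and $(h,k) \mapsto (h A_k^{-1}, k)$ is a two-sided inverse. The ``in particular'' claim is the special case $\alpha = \mathrm{id}_H$, for which $\beta = \iota$ and $H \rtimes_{\overline{\alpha}} \Z = H \times \Z$. The only genuinely delicate point in the whole argument is the homomorphism check: the $\beta$-twist on the source and the $\alpha$-twist on the target must be reconciled across the shifting factors $A_k$, and it is precisely the cocycle-type relation \ref{*.2} of \Cref{A*} that makes this bookkeeping close up — which is exactly why that lemma is isolated beforehand.
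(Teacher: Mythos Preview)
Your proof is correct and follows essentially the same approach as the paper: you define the same explicit map $\Phi(h,k)=(hA_k,k)$, verify the homomorphism property using the conjugation identity $\beta^{k}(h)=A_k\,\alpha^{k}(h)\,A_k^{-1}$ together with \Cref{A*}\ref{*.2}, and deduce the ``in particular'' by specializing $\alpha=\mathrm{id}_H$. The only difference is expository: you supply a motivating derivation of $\Phi$ via the element $t=(a,1)$ and its powers $t^k=(A_k,k)$, which the paper omits.
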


\begin{proof}
    Since $\iota$ belongs to $\Inn(H) = \{\iota_b \mid \iota_b(h) = bhb^{-1} \text{ for some } b \in H \text{ and all } h \in H\}$, there exists some $a \in H$ such that $\iota = \iota_a$. Moreover, it can be shown that for all $k \in \Z$,
    \[
        \beta^k(h) = \iota_{A_k} \circ \alpha^k(h)
    \]
    where $A_k$ is defined as described in \Cref{A*}.

    We then define $\phi \colon H \rtimes_{\overline{\beta}} \Z \to H \rtimes_{\overline{\alpha}} \Z$ such that
    \[
        \phi(h,k) = (hA_k, k).
    \]
    It is clear that $\phi$ is a bijection with its inverse given by
    \[
        \phi^{-1}(h,k) = (hA_k^{-1}, k).
    \]
    It remains to show that $\phi$ is a homomorphism. We have that for any $h_1,h_2 \in H$ and any $k_1,k_2 \in \Z$,
    \[
        \phi(h_1,k_1)\phi(h_2,k_2)
        = (h_1A_{k_1},k_1)(h_2A_{k_2},k_2)
        = (h_1A_{k_1}\alpha^{k_1}(h_2A_{k_2}), k_1+k_2)
    \]
    and using \Cref{A*} \ref{*.2} in the last equality of the following, we see that
    \begin{align*}
        \phi((h_1,k_1)(h_2,k_2))
        = \phi(h_1 \beta^{k_1}(h_2), k_1+k_2)
        &= \phi(h_1A_{k_1}\alpha^{k_1}(h_2)A_{k_1}^{-1}, k_1+k_2) \\
        &= (h_1A_{k_1}\alpha^{k_1}(h_2)A_{k_1}^{-1}A_{k_1+k_2}, k_1+k_2) \\
        &= (h_1A_{k_1}\alpha^{k_1}(h_2A_{k_2}), k_1+k_2).
    \end{align*}
    Therefore $\phi(h_1,k_1)\phi(h_2,k_2) = \phi((h_1,k_1)(h_2,k_2))$, so $\phi$ is an isomorphism.
    
    To finish the proof, we notice that if $\alpha$ is the identity, then $\beta = \iota$ and $H \rtimes_{\overline{\iota}} \Z$ is isomorphic to $H \times \Z$.
\end{proof}

We go on to discuss another condition for isomorphic semidirect products, which generalizes a result from \cite{alm}.

\begin{proposition} \label[proposition]{conjugation}
    Let $G$ be a group such that $\overline{\alpha}, \overline{\beta}: \Z \to \Aut(G)$ are homomorphisms given by $\overline{\alpha}(k) =  \varphi^k$ and $\overline{\beta}(k) = \beta^k$ for some $\alpha, \beta \in \Aut(G)$. If $\alpha$ is conjugate to $\beta$ in $\Aut(G)$, then $G \rtimes_{\overline{\alpha}} \Z \cong G \rtimes_{\overline{\beta}} \Z$.
\end{proposition}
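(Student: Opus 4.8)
The plan is to produce an explicit isomorphism built directly from the element of $\Aut(G)$ that realizes the conjugacy. Since $\alpha$ is conjugate to $\beta$ in $\Aut(G)$, there is some $\gamma \in \Aut(G)$ with $\beta = \gamma \circ \alpha \circ \gamma^{-1}$, and iterating this relation gives $\beta^k = \gamma \circ \alpha^k \circ \gamma^{-1}$ for every $k \in \Z$. Equivalently, $\gamma$ intertwines the two $\Z$-actions, in the sense that $\beta^{k} \circ \gamma = \gamma \circ \alpha^{k}$ for all $k$. This single identity is the whole engine of the proof.

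I would then define $\psi \colon G \rtimes_{\overline{\alpha}} \Z \to G \rtimes_{\overline{\beta}} \Z$ by $\psi(g, k) = (\gamma(g), k)$. Since $\gamma$ is an automorphism of $G$ and the second coordinate is left untouched, $\psi$ is immediately a bijection, with inverse $(g, k) \mapsto (\gamma^{-1}(g), k)$. The only remaining step, and the only one requiring computation, is to verify that $\psi$ is a homomorphism. On one side,
\[
    \psi\big((g_1, k_1)(g_2, k_2)\big) = \psi\big(g_1 \alpha^{k_1}(g_2),\, k_1 + k_2\big) = \big(\gamma(g_1)\, \gamma(\alpha^{k_1}(g_2)),\, k_1 + k_2\big),
\]
using that $\gamma$ is a homomorphism of $G$, while on the other,
\[
    \psi(g_1, k_1)\, \psi(g_2, k_2) = \big(\gamma(g_1),\, k_1\big)\big(\gamma(g_2),\, k_2\big) = \big(\gamma(g_1)\, \beta^{k_1}(\gamma(g_2)),\, k_1 + k_2\big).
\]
The two expressions agree precisely because of the intertwining identity, since $\beta^{k_1}(\gamma(g_2)) = \gamma(\alpha^{k_1}(g_2))$.

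I do not expect any genuine obstacle here beyond identifying the correct map: once one writes $\psi(g,k) = (\gamma(g),k)$ and records $\beta^k = \gamma \circ \alpha^k \circ \gamma^{-1}$, the homomorphism check collapses to a one-line substitution. The only points demanding care are the direction of the conjugation (whether $\gamma$ or $\gamma^{-1}$ appears, which must be pinned down so that $\beta^{k_1} \circ \gamma = \gamma \circ \alpha^{k_1}$ rather than the reverse), and the fact that $\gamma$ is applied to the full twisted product $g_1 \alpha^{k_1}(g_2)$ before being distributed across the factors, which is legitimate exactly because $\gamma$ is an automorphism. This mirrors the structure of the proof of \Cref{inn_isomorph}, but is simpler: there the correction term $A_k$ was needed to absorb the inner automorphism, whereas here conjugation by $\gamma$ directly transports one action to the other.
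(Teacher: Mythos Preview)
Your proof is correct and follows essentially the same approach as the paper: both choose the conjugating automorphism, define the map $(g,k)\mapsto(\gamma(g),k)$ (the paper calls it $\psi$ and writes $\phi(g,k)=(\psi(g),k)$), observe it is a bijection, and verify the homomorphism property via the identity $\beta^{k}\circ\gamma=\gamma\circ\alpha^{k}$. The only difference is cosmetic naming.
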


\begin{proof}
    Let $\beta = \psi \alpha \psi^{-1}$ for some $\psi \in \Aut(G)$. Then inductively, $\beta^k = \psi \alpha^k \psi^{-1}$ and we can write $\overline{\beta}: \Z \to G$ by $k \mapsto \psi \alpha^k \psi^{-1}$. Now define $\phi: G \rtimes_{\overline{\alpha}} \Z \to G \rtimes_{\overline{\beta}} \Z$ such that
    \[
        \phi(g,k) = (\psi(g), k).
    \]
    Note that $\phi$ is a bijection with inverse
    \[
        \phi^{-1}(g,k) = (\psi^{-1}(g),k).
    \]
    It remains to show that $\phi$ is a homomorphism. For any $g_1, g_2 \in G$ and $k_1, k_2 \in \Z$,
    \[
        \phi((g_1,k_1)(g_2,k_2))
        = \phi(g_1 \alpha^{k_1}(g_2), k_1+k_2)
        = (\psi(g_1 \alpha^{k_1}(g_2)), k_1+k_2)
    \]
    and
    \begin{align*}
        \phi(g_1, k_1) \phi(g_2, k_2)
        = (\psi(g_1), k_1)(\psi(g_2), k_2)
        &= (\psi(g_1) \beta^{k_1}(\psi(g_2)), k_1+k_2) \\
        &= (\psi(g_1) \psi(\alpha^{k_1}(g_2)), k_1+k_2) \\
        &= (\psi(g_1 \alpha^{k_1}(g_2)), k_1+k_2).
    \end{align*}
\end{proof}

\begin{remark}\label[remark]{isomorphism_class}
    As a consequence of \Cref{inn_isomorph} and \Cref{conjugation}, the isomorphism class of $H \rtimes_{\overline{\varphi}} \Z$ depends on
    \[
        [\varphi] = \Inn(H) \varphi \in \Aut(H)/\Inn(H) = \Out(H).
    \]
\end{remark}

Now, we construct an $n$-step poly-$\Z$ group $G_n$ inductively as follows. Let $G_0 = \{ 1 \}$ and $G_{i+1} = G_i \rtimes_{\overline{\varphi}_i} \Z$ for all $0 \leq i \leq n-1$ where $\varphi_i \in \Aut(G_i)$. Thus
\begin{equation*}
	G_n = (\dots ((\Z \rtimes_{\overline{\varphi}_1} \Z) \rtimes_{\overline{\varphi}_2} \Z) \rtimes_{\overline{\varphi}_3} \dots ) \rtimes_{\overline{\varphi}_{n-1}} \Z.
\end{equation*}
By \Cref{isomorphism_class}, at every step we choose a representative $\varphi_i$ of the class $[\varphi_i] \in \Out(G_i)$. For all $0 \leq i \leq n-1$, set $g_{i+1} = (1_{G_i}, 1)$. Then $\{G_0, \dots, G_n\}$ is the polycyclic series and $\{g_1, \dots, g_n\}$ is the polycyclic generating sequence of $G_n$. With the identification above, we have that for all $0 \leq i \leq n-1$, $\varphi_{i}^k(h) = g_{i+1}^khg_{i+1}^{-k}$ for all $h \in G_i$ and $k \in \Z$. We use the following $i \times i$ matrix notation for $\varphi_i$ for $1 \leq i \leq n-1$:
\[
    \varphi_i =
    \begin{bmatrix}
        a_{1,1} & \dots & a_{1, i} \\
        \vdots & \vdots & \vdots \\
        a_{i, 1} & \dots & a_{i, i}
    \end{bmatrix}
\]
where $\varphi_i(g_j) = g_1^{a_{1,j}}g_2^{a_{2,j}} \dots g_i^{a_{i,j}}$ for $j \leq i$.

We propose the following method for constructing a poly-$\Z$ group so as to increase the complexity of the multiplicative structure with each extension from $G_i$ to $G_{i+1}$. Construct an $n$-step poly-$\Z$ group as described above such that each $\varphi_i$ is chosen from $\Aut(G_i)$ such that $[\varphi_i] \in \Out(G_i)$ does not contain the identity element of $\Aut(G_i)$. It follows that $G_{i+1} = G_i \rtimes_{\overline{\varphi}_i} \Z \not\cong G_i \times \Z$ for all $0 \leq i \leq n-1$. Thus, the multiplicative structure of $G_{i+1}$ is more complicated than that of $G_i$, and we hypothesize that the conjugacy problem becomes increasingly complex with each extension.

\section{Constructions of poly-$\Z$ groups}\label{s_12stepgroups}
\subsection{$1$- and $2$-step poly-$\Z$ groups}
Up to isomorphism, there exists only one $1$-step poly-$\Z$ group $G_1 \cong \Z$. Since $\Aut(\Z) = \Out(\Z) = \left\{ [1], [-1]\right\}$, it follows that any $2$-step poly-$\Z$ group
\[
    G_2 \cong \Z \rtimes_{\overline{\varphi}} \Z =
    \begin{cases}
        \Z \times \Z &\text{ if } \varphi = [1] \\
        \langle g_1, g_2 \vert g_2g_1 = g_1^{-1}g_2 \rangle &\text{ if } \varphi = [-1].
    \end{cases}
\]
Note that when interpreted topologically, a $1$-step poly-$\Z$ group is isomorphic to the fundamental group of the circle, and a $2$-step poly-$\Z$ group is isomorphic to the fundamental group of either the torus (if abelian) or the Klein bottle (if not abelian).

\subsection{$3$-step poly-$\Z$ groups}
To construct a $3$-step poly-$\Z$ group $G_3$, we must first describe the automorphism group of $G_2$. In the case where $G_2 \cong \Z \times \Z$, we have that $\Aut(G_2) \cong \mathrm{GL}(2, \Z)$ and the reciprocal of \Cref{conjugation} is true \cite{gz}, \cite{n}. Thus, determining whether or not $(\Z \times \Z) \rtimes_{\overline{\varphi}} \Z$ and $(\Z \times \Z) \rtimes_{\overline{\varphi}'} \Z$ are isomorphic is equivalent to determining whether or not $\varphi, \varphi' \in \mathrm{GL}(2,\Z)$ are conjugate. To aid in identifying isomorphisms between these $3$-step poly-$\Z$ groups, there exists an algorithm for determining when two elements are conjugate in $\mathrm{GL}(2, \Z)$ \cite{ct}.

In the case where $G_2 = \langle g_1, g_2 \vert g_2g_1 = g_1^{-1}g_2 \rangle$, the description of $\Aut(G_2)$ is given by \Cref{automorphisms}, which relies on the following lemma. Throughout this paper, we define
\[
    \mu(x) =
    \begin{cases}
        0 &\text{ for } x \text{ even} \\
        1 &\text{ for } x \text{ odd}
    \end{cases}
\]
for any $x \in \Z$.

\begin{lemma}\label[lemma]{2stepexponents}
    Let $G_2$ be the $2$-step polycyclic group given by $\langle g_1, g_2 \mid g_2g_1 = g_1^{-1}g_2 \rangle$. Then the following hold:
    
    \begin{arabiclist}
        \item\label{2step:2} $g_2^ag_1^b = g_1^{b(-1)^a}g_2^a$
        \item\label{2step:3} $(g_1^{a}g_2^{b})^m = 
        \begin{cases}
            g_1^{ma}g_2^{mb} &\text{ for } b \text{ even} \\
    		g_1^{\mu(m)a}g_2^{mb} &\text{ for } b \text{ odd.}
        \end{cases}$
    \end{arabiclist}
\end{lemma}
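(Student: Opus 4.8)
The plan is to prove \ref{2step:2} first, since \ref{2step:3} will follow from it by induction on $m$. The defining relation $g_2 g_1 = g_1^{-1} g_2$ is exactly the statement that conjugation by $g_2$ inverts $g_1$, that is, $g_2 g_1 g_2^{-1} = g_1^{-1}$. Taking inverses of the relation yields $g_2^{-1} g_1 g_2 = g_1^{-1}$ as well, so conjugation by $g_2^{-1}$ also inverts $g_1$; both are consistent with $(-1)^{\pm 1} = -1$. Since conjugation is an automorphism it respects powers, giving $g_2 g_1^b g_2^{-1} = g_1^{-b}$, i.e. $g_2 g_1^b = g_1^{-b} g_2$. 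I would then establish \ref{2step:2} by induction on $a \ge 0$, writing $g_2^a g_1^b = g_2^{a-1}(g_2 g_1^b) = g_2^{a-1} g_1^{-b} g_2$ and applying the inductive hypothesis to $g_2^{a-1} g_1^{-b}$, so the exponent of $g_1$ picks up exactly one more factor of $-1$; the base case $a = 0$ is trivial. The case $a < 0$ is handled identically using $g_2^{-1} g_1^b = g_1^{-b} g_2^{-1}$. In every case the exponent of $g_1$ is multiplied by $-1$ once per factor of $g_2^{\pm 1}$, producing $b(-1)^a$.

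For \ref{2step:3}, the strategy is to set up and solve a recurrence for the accumulated $g_1$-exponent. Assuming inductively that $(g_1^a g_2^b)^m = g_1^{c_m} g_2^{mb}$ for some $c_m \in \Z$, with $c_0 = 0$, multiplying on the right by $g_1^a g_2^b$ and using \ref{2step:2} to move $g_2^{mb}$ past $g_1^a$ would give
\[
    (g_1^a g_2^b)^{m+1} = g_1^{c_m} g_2^{mb} g_1^a g_2^b = g_1^{c_m} g_1^{a(-1)^{bm}} g_2^{mb} g_2^b = g_1^{\,c_m + a(-1)^{bm}} g_2^{(m+1)b},
\]
so that $c_{m+1} = c_m + a(-1)^{bm}$, reducing everything to analyzing this single recurrence.

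It then remains to solve the recurrence in the two cases. If $b$ is even, $(-1)^{bm} = 1$ for every $m$, so $c_m = ma$ and $(g_1^a g_2^b)^m = g_1^{ma} g_2^{mb}$. If $b$ is odd, $(-1)^{bm} = (-1)^m$ and the recurrence gives the alternating sum $c_m = a \sum_{j=0}^{m-1} (-1)^j$, which equals $a$ when $m$ is odd and $0$ when $m$ is even, i.e. exactly $\mu(m) a$. Recognizing that this alternating sum telescopes to $\mu(m)a$ is the one genuinely substantive point of the argument; once it is identified, both cases close at once. The recurrence runs equally in the negative direction, so the formula extends to all $m \in \Z$, which one can also verify directly by inverting via \ref{2step:2}: $(g_1^a g_2^b)^{-1} = g_2^{-b} g_1^{-a} = g_1^{-a(-1)^b} g_2^{-b}$.
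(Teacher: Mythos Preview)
Your argument is correct. Part \ref{2step:2} is essentially the paper's proof unwound: the paper writes the whole computation in one line via $g_2^a g_1^b g_2^{-a} = \varphi_1^a(g_1^b) = g_1^{b(-1)^a}$ using the semidirect-product action, while you do the same thing by explicit induction on $|a|$; the content is identical.

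For part \ref{2step:3} you take a genuinely different route. The paper splits into cases on the parity of $b$ and then of $m$: for $b$ even it inducts directly; for $b$ odd it first computes the square $(g_1^a g_2^b)^2 = g_2^{2b}$, reduces even $m$ to a power of this square, and handles odd $m$ by peeling off one factor. You instead posit the normal form $(g_1^a g_2^b)^m = g_1^{c_m} g_2^{mb}$, derive the single recurrence $c_{m+1} = c_m + a(-1)^{bm}$ from \ref{2step:2}, and solve it in closed form, recognizing the alternating partial sum as $\mu(m)a$ when $b$ is odd. Your approach is more uniform and extends to negative $m$ automatically by running the recurrence backward, at the cost of needing to identify the telescoping sum; the paper's square trick is more hands-on but makes the cancellation for odd $b$ visible in a single concrete computation.
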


\begin{proof}
    Notice that $G_2 \cong G_1 \rtimes_{\overline{\varphi}_1} \Z$ with $\varphi_1 \in \Aut(G_1)$ given by $\varphi_1(g_1) = g_2g_1g_2^{-1} = g_1^{-1}$. To prove part \ref{2step:2}, consider $g_2^ag_1^b$ in terms of $\varphi_1$:
    \[
        g_2^ag_1^b
        = g_2^ag_1^bg_2^{-a}g_2^a
        = \varphi_1^a(g_1^b)g_2^a
        = (\varphi_1^a(g_1))^bg_2^a
        = (g_1^{(-1)^a})^bg_2^a
        = g_1^{b(-1)^a}g_2^a.
    \]
    
    If $b$ is even, then it is clear that \ref{2step:3} holds for $m = 0, \pm1$. If $m < -1$, or $m = -l$ for some positive integer $l > 1$, then using \ref{2step:2},
    \[
        (g_1^{a}g_2^{b})^m
        = (g_1^{a}g_2^{b})^{-l}
        = ((g_1^{a}g_2^{b})^{-1})^l
        = (g_1^{-a}g_2^{-b})^l
        = (g_1^{-a}g_2^{-b})^{-m}.
    \]
    So this reduces to the case where $m > 1$, which we prove by induction on $m$. Using \ref{2step:2}, when $m=2$, $(g_1^{a}g_2^{b})^2 = g_1^{a}g_2^{b}g_1^{a}g_2^{b} = g_1^{2a}g_2^{2b}$. Assuming that $(g_1^{a}g_2^{b})^m = g_1^{ma}g_2^{mb}$, we have
    \[
    	(g_1^{a}g_2^{b})^{m+1}
    	= g_1^{ma}g_2^{mb} g_1^{a}g_2^{b}
    	= g_1^{(m+1)a}g_2^{(m+1)b}.
    \]
    
    Now, let $b$ be odd and $m$ be even. For $m=2$, using  \ref{2step:2}, we have
    \[
    	(g_1^{a}g_2^{b})^2 = g_1^{a}g_2^{b}g_1^{a}g_2^{b}
    	= g_2^{2 b}.
    \]
    For any even $m$, we have $m = 2k$ for some $k \in \Z$. Thus,
    \[
    	(g_1^{a}g_2^{b})^m = (g_1^{a}g_2^{b})^{2k}
    	= ((g_1^{a}g_2^{b})^2)^k
    	= (g_2^{2 b})^k
    	=  g_2^{2k b}
    	=  g_2^{mb}.
    \]
    
    Finally, let both $b$ and $m$ be odd, so $m = 2k+1$ for some $k \in \Z$. From part \ref{2step:2} and the fact that $(g_1^{a}g_2^{b})^{2k} = g_2^{2kb}$, we have
    \[
    	(g_1^{a}g_2^{b})^m = (g_1^{a}g_2^{b})^{2k+1}
    	= (g_1^{a}g_2^{b})^{2k}g_1^{a}g_2^{b}
    	= g_2^{2k b}g_1^{a}g_2^{b}
    	= g_1^{a}g_2^{2k b + b}
    	= g_1^{a}g_2^{mb}.
    \]
\end{proof}

Using the properties introduced in \Cref{2stepexponents}, we go on to define $\Aut(G_2)$.

\begin{theorem}\label{automorphisms}
    Let $G_2$ be the $2$-step polycyclic group given by $\langle g_1, g_2 \mid g_2g_1 = g_1^{-1}g_2 \rangle$. Then
    \[
        \Aut(G_2) = \left\{ \alpha_a, \beta_a, \gamma_a, \delta_a \mid a \in \Z \right\}
        \quad \text{ and } \quad
        \Inn(G_2) = \left \{ \beta_{2a}, \gamma_{2a}, \mid a \in \Z \right \}
    \]
    where
    \[
        \alpha_a = \begin{bmatrix} 1 & a \\ 0 & -1 \end{bmatrix},
        \beta_a = \begin{bmatrix} -1 & a \\ 0 & 1 \end{bmatrix},
        \gamma_a = \begin{bmatrix} 1 & a \\ 0 & 1 \end{bmatrix},
        \delta_a = \begin{bmatrix} -1 & a \\ 0 & -1 \end{bmatrix}
    \]
    with composition given by
    \begin{equation} \label{composition}
        \begin{tabular}{c|c c c c}
            $\circ$ & $\alpha_{a'}$ & $\beta_{b'}$ & $\gamma_{c'}$ & $\delta_{d'}$ \\
            \cline{1-5}
            $\alpha_a$ & $\gamma_{a+a'}$ & $\delta_{a+b'}$ & $\alpha_{a+c'}$ & $\beta_{a+d'}$ \\
            $\beta_b$ & $\delta_{b-a'}$ & $\gamma_{b-b'}$ & $\beta_{b-c'}$ & $\alpha_{b-d'}$ \\
            $\gamma_c$ & $\alpha_{c+a'}$ & $\beta_{c+b'}$ & $\gamma_{c+c'}$ & $\delta_{c+d'}$ \\
            $\delta_d$ & $\beta_{d-a'}$ & $\alpha_{d-b'}$ & $\delta_{d-c'}$ & $\gamma_{d-d'}$ \\
        \end{tabular}.
    \end{equation}
    In particular,
    \[
        \Aut(G_2) =
        \left\langle
            \alpha_0, \alpha_1, \beta_0, \beta_1
        \right\rangle
        \quad \text{ and } \quad
        \Inn(G_2) = \left\langle \alpha_1^2, \beta_0 \right\rangle.
    \]
\end{theorem}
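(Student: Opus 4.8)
The plan is to establish the four asserted equalities in order, taking the explicit list $S = \{\alpha_a, \beta_a, \gamma_a, \delta_a \mid a \in \Z\}$ and its composition table \eqref{composition} as the backbone and deriving everything else from them. First I would check that each map in $S$ is a well-defined endomorphism: since all four send $g_1 \mapsto g_1^{\pm 1}$ and $g_2 \mapsto g_1^a g_2^{\pm 1}$, a one-line substitution using \Cref{2stepexponents} \ref{2step:2} shows each respects the single defining relation $g_2 g_1 g_2^{-1} = g_1^{-1}$. I would then compute the composition table by evaluating each composite on the generators $g_1, g_2$ and renormalizing via \Cref{2stepexponents}; this is the most calculation-heavy step, but it is routine. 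The payoff is that the table exhibits $S$ as closed under composition, containing the identity $\gamma_0$, and with a two-sided inverse for every element (for instance $\alpha_a^{-1} = \alpha_{-a}$, $\gamma_a^{-1} = \gamma_{-a}$, and $\beta_a^{-1} = \beta_a$, $\delta_a^{-1} = \delta_a$ since $\beta_b \circ \beta_{b'} = \gamma_{b-b'}$ and $\delta_d \circ \delta_{d'} = \gamma_{d-d'}$). Hence every element of $S$ is bijective, so $S \leq \Aut(G_2)$.

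The crucial step is the reverse inclusion $\Aut(G_2) \subseteq S$. Here I would use the projection $\pi \colon G_2 \to \Z$ given by $\pi(g_1) = 0$, $\pi(g_2) = 1$, a homomorphism with $\ker \pi = \langle g_1 \rangle$. Because $\Z$ is abelian, $\pi$ is constant on conjugacy classes. Given $\varphi \in \Aut(G_2)$, applying $\varphi$ to $g_2 g_1 g_2^{-1} = g_1^{-1}$ shows that $\varphi(g_1)$ is conjugate to $\varphi(g_1)^{-1}$, whence $\pi(\varphi(g_1)) = -\pi(\varphi(g_1)) = 0$ and $\varphi(g_1) \in \langle g_1 \rangle$. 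Thus every endomorphism maps $\langle g_1 \rangle$ into itself; applying this to both $\varphi$ and $\varphi^{-1}$ shows $\langle g_1 \rangle$ is characteristic, so $\varphi$ restricts to an automorphism of $\langle g_1 \rangle \cong \Z$ and induces one of $G_2 / \langle g_1 \rangle \cong \Z$. Writing $\varphi(g_1) = g_1^{p}$ and $\varphi(g_2) = g_1^{r} g_2^{s}$ in normal form, these two facts force $p, s \in \{\pm 1\}$ with $r \in \Z$ arbitrary, i.e.\ $\varphi \in S$; the four sign choices for $(p,s)$ recover precisely $\gamma_a, \alpha_a, \beta_a, \delta_a$.

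For $\Inn(G_2)$ I would compute $\iota_{g_1^c g_2^d}$ on the generators with \Cref{2stepexponents} \ref{2step:2}: one finds $g_1^c g_2^d\, g_1\, g_2^{-d} g_1^{-c} = g_1^{(-1)^d}$ and $g_1^c g_2^d\, g_2\, g_2^{-d} g_1^{-c} = g_1^{2c} g_2$, so the inner automorphism equals $\gamma_{2c}$ when $d$ is even and $\beta_{2c}$ when $d$ is odd; letting $c, d$ range over $\Z$ yields exactly $\{\beta_{2a}, \gamma_{2a} \mid a \in \Z\}$. Both generation statements then follow by reading off \eqref{composition}: from $\alpha_0, \alpha_1$ one obtains every $\gamma_a$ (since $\gamma_1 = \alpha_0 \circ \alpha_1$ and $\gamma_c \circ \gamma_{c'} = \gamma_{c+c'}$), after which $\alpha_a = \gamma_a \circ \alpha_0$, $\beta_a = \gamma_a \circ \beta_0$, and $\delta_a = \alpha_0 \circ \beta_a$ exhaust $S$; while $\alpha_1^2 = \gamma_2$ together with $\beta_0$ generate all $\gamma_{2a}$ and $\beta_{2a}$, which is $\Inn(G_2)$.

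The main obstacle I anticipate is the reverse inclusion, and within it the clean identification that $\varphi(g_1)$ must be a power of $g_1$. The conjugacy invariance of $\pi$ is what makes this painless; once $\langle g_1 \rangle$ is known to be characteristic, restriction and passage to the quotient pin down the exponents to $\pm 1$, and the remaining bookkeeping is entirely mechanical given the composition table.
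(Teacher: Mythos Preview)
Your proposal is correct and covers the same ground as the paper, but the argument for the reverse inclusion $\Aut(G_2)\subseteq S$ is organized differently. The paper writes an arbitrary $\psi=\begin{bsmallmatrix}a&c\\b&d\end{bsmallmatrix}$, expands the preserved relation $\psi(g_2)=\psi(g_1)\psi(g_2)\psi(g_1)$ in normal form via \Cref{2stepexponents}, reads off $b=0$ and $d$ odd, then repeats for $\psi^{-1}$ and uses $\psi\psi^{-1}=\mathrm{id}$ to force $a,d\in\{\pm1\}$. Your route is more structural: the projection $\pi\colon G_2\to\Z$ with kernel $\langle g_1\rangle$ together with the observation that $\varphi(g_1)$ is conjugate to its inverse immediately yields $\varphi(g_1)\in\langle g_1\rangle$, so $\langle g_1\rangle$ is characteristic and both the restriction and the induced map on the quotient are automorphisms of $\Z$, pinning the diagonal entries to $\pm1$ without any normal-form expansion. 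This buys you a shorter computation and a conceptual explanation for the upper-triangular shape; the paper's approach, by contrast, is entirely self-contained within the normal-form calculus already set up in \Cref{2stepexponents} and requires no auxiliary homomorphism. The remaining steps (verifying the table, computing $\Inn(G_2)$ by conjugating by $g_1^cg_2^d$, and reading off the generating sets from \eqref{composition}) match the paper's proof essentially verbatim; your derivation in fact shows the slightly stronger $\Aut(G_2)=\langle\alpha_0,\alpha_1,\beta_0\rangle$, which of course implies the stated four-element generating set.
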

    
\begin{proof}
    From the multiplicative structure defined, it follows that $G_2 = \langle g_1, g_2 \vert g_2g_1 = g_1^{-1}g_2 \rangle$. Let
    \[
        \psi = \begin{bmatrix} a & c \\ b & d \end{bmatrix} \in \Aut(G_2).
    \]
    Since the relation $\psi(g_2) = \psi(g_1)\psi(g_2)\psi(g_1)$ must be preserved and using \Cref{2stepexponents} we obtain
    \[
        g_1^c g_2^d
        = g_1^a g_2^b g_1^c g_2^d g_1^a g_2^b
        = g_1^{a + c (-1)^{b} + a (-1)^{d + b}} g_2^{2 b + d}.
    \]
    Thus
    \begin{align*}
        c &= a + c (-1)^{b} + a (-1)^{d + b}\\
        d &= 2 b + d.
    \end{align*}
    The second equality gives $b = 0$ and the first becomes $0 = a(1 + (-1)^d)$. We cannot have both $a, b = 0$, so we must have that $d$ is odd. If we set
    \[
        \psi^{-1} = \begin{bmatrix} a' & c' \\ b' & d' \end{bmatrix}
    \]
    then for the same reasons as above, $b'=0$ and $d'$ is odd. The equalities $g_1 = \psi(\psi^{-1}(g_1)) = g_1^{a a '}$ imply that $aa' = 1$.
    Therefore, $a = \pm 1$. Similarly, the equalities $g_2 = \psi(\psi^{-1}(g_2)) = g_1^{a c ' + c} g_2^{dd'}$ imply that $dd' = 1$. Therefore, $d = \pm 1$. It follows that $\psi \in \{ \alpha_c, \beta_c, \gamma_c, \delta_c\}$.
    
    Further, it is easy to check that $\alpha_a, \beta_a, \gamma_a, \delta_a \in \Aut(G_2)$ for all $a \in \Z$ with composition table given by \eqref{composition}. In particular, it follows that this group can be generated by $\alpha_0$, $\alpha_1$, $\beta_0$, and $\beta_1$, with identity element $\gamma_0$. Also note that
    \[
        \alpha_a^{-1} = \alpha_{-a},
        \quad \beta_a^{-1} = \beta_a,
        \quad \gamma_a^{-1} = \gamma_{-a},
        \quad \delta_a^{-1} = \delta_a.
    \]
    
    To identify $\Inn(G_2)$, let $i_h \in \Inn(G_2)$ where $h = g_1^ag_2^b$ for some $a, b \in \Z$. It follows that
    \[
        g_1^ag_2^b g_1 g_2^{-b}g_1^{-a} = g_1^{(-1)^b}
        \quad \text{ and } \quad
        g_1^ag_2^b g_2 g_2^{-b}g_1^{-a} = g_1^{2a} g_2.
    \]
    Therefore $i_h = \begin{bmatrix} \pm 1 & 2a \\ 0 & 1 \end{bmatrix}$ and $\Inn(G_2) = \left\{ \beta_{2b}, \gamma_{2c}, ~\vert~ b, c \in \Z \right\} = \left\langle \alpha_1^2, \beta_0 \right\rangle$.
\end{proof}

Using \Cref{automorphisms} and \cite{cs}, we identify all groups $G_3 = G_2 \rtimes_{\overline{\varphi}_2} \Z$ up to isomorphism as a corollary of the previous statement.

\begin{corollary}\label[corollary]{2stepautgroup}
    Let $G_2$ be the $2$-step polycyclic group given by $\langle g_1, g_2 \mid g_2g_1 = g_1^{-1}g_2 \rangle$. Then for every $\varphi_2 \in \Aut(G_2)$, there exists exactly one
    \[
        \varphi \in
            \left\{
                \begin{bmatrix} 1 & 0 \\ 0 & -1 \end{bmatrix},
                \begin{bmatrix} 1 & 1 \\ 0 & -1 \end{bmatrix},
                \begin{bmatrix} -1 & 0 \\ 0 & 1 \end{bmatrix},
                \begin{bmatrix} -1 & 1 \\ 0 & 1 \end{bmatrix}
            \right\}
    \]
    such that $G_2 \rtimes_{\overline{\varphi}_2} \Z \cong G_2 \rtimes_{\overline{\varphi}} \Z$.
\end{corollary}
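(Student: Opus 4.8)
\section*{Proof proposal}

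The plan is to split the statement into an \emph{existence} part, handled entirely by the outer automorphism group of $G_2$, and a \emph{uniqueness} part, handled by an isomorphism invariant. First I would read off $\Out(G_2)=\Aut(G_2)/\Inn(G_2)$ from \Cref{automorphisms}. Since $\Inn(G_2)=\{\beta_{2a},\gamma_{2a}\mid a\in\Z\}$, computing the left cosets $\Inn(G_2)\psi$ directly from the composition table \eqref{composition} shows that $\Aut(G_2)$ breaks into exactly four cosets, sorted by ``letter type'' ($\alpha/\delta$ versus $\beta/\gamma$) and by the parity of the subscript:
\[
\{\alpha_{\mathrm{even}},\delta_{\mathrm{even}}\},\quad
\{\alpha_{\mathrm{odd}},\delta_{\mathrm{odd}}\},\quad
\{\beta_{\mathrm{even}},\gamma_{\mathrm{even}}\},\quad
\{\beta_{\mathrm{odd}},\gamma_{\mathrm{odd}}\}.
\]
The four matrices listed in the statement are precisely $\alpha_0,\alpha_1,\beta_0,\beta_1$, one taken from each coset, so they form a complete set of coset representatives and $|\Out(G_2)|=4$.

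For existence I would invoke \Cref{isomorphism_class} (equivalently \Cref{inn_isomorph}): an arbitrary $\varphi_2\in\Aut(G_2)$ lies in exactly one of the four cosets, say that of the representative $\varphi\in\{\alpha_0,\alpha_1,\beta_0,\beta_1\}$, so $\varphi_2=\iota\circ\varphi$ for some $\iota\in\Inn(G_2)$ and hence $G_2\rtimes_{\overline{\varphi}_2}\Z\cong G_2\rtimes_{\overline{\varphi}}\Z$. This yields at least one $\varphi$ in the list with the required property.

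The real work is uniqueness, i.e.\ showing the four listed products are pairwise non-isomorphic, so that conjugation in $\Aut(G_2)$ (\Cref{conjugation}) does not merge the four classes any further. Here I would pass to the abelianization, an isomorphism invariant. Writing $G_3=G_2\rtimes_{\overline{\varphi}}\Z=\langle g_1,g_2,g_3\mid g_2g_1g_2^{-1}=g_1^{-1},\ g_3g_1g_3^{-1}=\varphi(g_1),\ g_3g_2g_3^{-1}=\varphi(g_2)\rangle$ and abelianizing, the relation inherited from $G_2$ forces $2[g_1]=0$, while the two relations coming from $\varphi$ contribute exactly the data in the columns of $\varphi$. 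A short computation for each matrix gives
\[
G_3^{\mathrm{ab}}\cong
\begin{cases}
\Z/2\oplus\Z/2\oplus\Z & \varphi=\alpha_0 \\
\Z/4\oplus\Z & \varphi=\alpha_1 \\
\Z/2\oplus\Z\oplus\Z & \varphi=\beta_0 \\
\Z\oplus\Z & \varphi=\beta_1.
\end{cases}
\]
These are pairwise non-isomorphic, since their torsion subgroups ($\Z/2\oplus\Z/2$, $\Z/4$, $\Z/2$, $0$) and free ranks ($1,1,2,2$) distinguish them. Hence the four semidirect products are pairwise non-isomorphic, and combined with existence, exactly one $\varphi$ in the list produces a group isomorphic to $G_2\rtimes_{\overline{\varphi}_2}\Z$.

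I expect the coset bookkeeping to be routine once \Cref{automorphisms} is in hand, and the genuine obstacle to be uniqueness: \Cref{isomorphism_class} guarantees only that equal outer classes give isomorphic products, not the converse, so a separating invariant is needed. The abelianization is the simplest candidate and, conveniently, already tells all four cases apart; had two abelianizations coincided I would have had to fall back on a finer invariant such as the abelianization of the commutator subgroup or the isomorphism type of the Fitting subgroup.
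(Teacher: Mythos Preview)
Your proposal is correct, and the existence half matches the paper's argument essentially verbatim: both read off the four cosets of $\Inn(G_2)$ in $\Aut(G_2)$ from \Cref{automorphisms} and invoke \Cref{inn_isomorph} to reduce to the representatives $\alpha_0,\alpha_1,\beta_0,\beta_1$.

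For uniqueness, however, you take a genuinely different and arguably cleaner route. The paper distinguishes the four groups by computing their centers: $Z(G_{3,\alpha_0})=\langle g_3^2\rangle$, $Z(G_{3,\alpha_1})=\{1\}$, and $Z(G_{3,\beta_0})=Z(G_{3,\beta_1})=\langle g_2^2,g_3^2\rangle$. This separates $\alpha_0$ and $\alpha_1$ from each other and from the $\beta$ pair, but leaves $\beta_0$ and $\beta_1$ undistinguished; the paper then dispatches that last case with an ad hoc argument (``by constructing an arbitrary map $\phi:G_{3,\beta_0}\to G_{3,\beta_1}$, it can be verified that $\phi$ cannot be an isomorphism''). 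Your abelianization invariant, by contrast, separates all four groups in one stroke, since the four abelianizations $\Z/2\oplus\Z/2\oplus\Z$, $\Z/4\oplus\Z$, $\Z/2\oplus\Z^2$, $\Z^2$ are visibly pairwise non-isomorphic. The trade-off is that the paper's center computation feeds into the later analysis of $\Aut(G_3)$, whereas your invariant is purely a one-off; but as a proof of this corollary alone, abelianization is the more efficient choice.
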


\begin{proof}
    From \Cref{automorphisms}, we have that $\Aut(G_2)$ is made up of elements of forms
    \[
        \alpha_a = \begin{bmatrix} 1 & a \\ 0 & -1 \end{bmatrix},
        \beta_a = \begin{bmatrix} -1 & a \\ 0 & 1 \end{bmatrix},
        \gamma_a = \begin{bmatrix} 1 & a \\ 0 & 1 \end{bmatrix},
        \delta_a = \begin{bmatrix} -1 & a \\ 0 & -1 \end{bmatrix}
    \]
    for $a \in \Z$, where all elements in $\Inn(G_2)$ are of form $\beta_{2b}$ or $\gamma_{2c}$ for $b, c \in \Z$. From \Cref{inn_isomorph}, we identify all groups $G_2 \rtimes_{\overline{\varphi}_2} \Z$ up to isomorphism by determining which homomorphisms mapping $\Z$ to $\Aut(G_2)$ differ by $\beta_{2b}$ or $\gamma_{2c}$ for $b, c \in \Z$.
    
    Letting $\varphi \sim \varphi'$ indicate that $G_2 \rtimes_{\overline{\varphi}} \Z \cong G_2 \rtimes_{\overline{\varphi}'} \Z$, from the multiplicative structure of $\Aut(G_2)$ defined in \Cref{automorphisms},
    \begin{equation*}
        \begin{tabular}{c|c c c c}
            $\circ$ & $\alpha_b$ & $\beta_b$ & $\gamma_b$ & $\delta_b$  \\
            \cline{1-5}
            $\beta_{2a}$ & $\delta_{2a-b}$ & $\gamma_{2a-b}$ & $\beta_{2a-b}$ & $\alpha_{2a-b}$ \\
            $\gamma_{2a}$ & $\alpha_{2a+b}$ & $\beta_{2a+b}$ & $\gamma_{2a+b}$ & $\delta_{2a+b}$.
        \end{tabular}
    \end{equation*}
    So for any $\overline{\varphi} \colon \Z \to \Aut(G_2)$, we have that $\varphi \sim \alpha_0, \varphi \sim \alpha_1, \varphi \sim \beta_0, \text{ or } \varphi \sim \beta_1$.
    
    It remains to be shown that for any two $\varphi, \varphi' \in \{ \alpha_0, \alpha_1, \beta_0, \beta_1 \}$, $G_2 \rtimes_{\overline{\varphi}} \Z$ and $G_2 \rtimes_{\overline{\varphi}'} \Z$ are distinct up to isomorphism. Letting $G_{3,\varphi} = G_2 \rtimes_{\overline{\varphi}} \Z$, it can be shown that $Z(G_{3,\alpha_0}) = \langle g_3^2 \rangle$, $Z(G_{3,\alpha_1}) = \{ \mathrm{id} \}$, $Z(G_{3,\beta_0}) = \langle g_2^2, g_3^2 \rangle$, and $Z(G_{3,\beta_1}) = \langle g_2^2, g_3^2 \rangle$. Because isomorphic groups have isomorphic centers, it follows that $G_{3, \alpha_0}$ and $G_{3, \alpha_1}$ are distinct. Finally, by constructing an arbitrary map $\phi : G_{3,\beta_0} \to G_{3,\beta_1}$, it can be verified that $\phi$ can not be an isomorphism.
\end{proof}

\section{Descriptions of some $3$-step poly-$\Z$ groups} \label{s_3step}
In this section, we describe the structure of all $3$-step poly-$\Z$ groups $G_3 = G_2 \rtimes_{\overline{\varphi}_2} \Z$ up to isomorphism where $G_2$ is the group described in \Cref{2stepexponents}. For the group described in \Cref{defG3_b1}, we provide complete proofs for the convenience of the reader. For the remaining groups discussed in this section, the proofs of the multiplicative structure were omitted and those of the automorphism groups were shortened for the sake of conciseness.

\chunk{\label{defG3_b1}}
Consider
\begin{align*}
    G_1 &= \Z \\
    G_2 &= G_1 \rtimes_{\overline{\varphi}_1} \Z
    \quad \text{with} \quad
    \varphi_1 = [-1] \\
    G_3 &= G_2 \rtimes_{\overline{\varphi}_2} \Z
    \quad \text{with} \quad
    \varphi_2 = \begin{bmatrix} -1 & 1 \\ 0 & 1 \end{bmatrix}.
\end{align*}
Note that $G_3$ is well-defined as $\varphi_2 = \beta_1$ from \Cref{automorphisms}, and that
\[
    \langle g_1, g_2, g_3 \mid g_2g_1 = g_1^{-1}g_2, g_3g_1 = g_1^{-1}g_3, g_3g_2 = g_1g_2g_3 \rangle
\]
is a polycyclic presentation of $G_3$. We begin by defining the multiplicative structure of $G_3$.

\begin{lemma}\label[lemma]{b1_exponents}
    Let $G_3$ be the polycyclic group defined in \ref{defG3_b1}. Then the following hold:
    \begin{arabiclist}
        \item \label{b1:1} $g_i^ag_1^b = g_1^{b(-1)^a}g_i^a$ for $i = 2,3$
        \item \label{b1:2} $g_3^ag_2^b = g_1^{\mu(a)\mu(b)}g_2^bg_3^a$
        \item \label{b1:3} $(g_1^ag_2^bg_3^c)^m =
        \begin{cases}
            g_1^{ma}g_2^{mb}g_3^{mc} &\text{ for } b, c \text{ even} \\
            g_1^{ma - \lfloor m/2 \rfloor}g_2^{mb}g_3^{mc} &\text{ for } b, c \text{ odd} \\
            g_1^{\mu(m)a}g_2^{mb}g_3^{mc} &\text{ for } b+c \text{ odd.}
        \end{cases}$
    \end{arabiclist}
\end{lemma}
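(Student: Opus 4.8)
The plan is to prove the three identities in order, using the semidirect‑product description $G_3 = G_2 \rtimes_{\overline{\varphi}_2}\Z$ in which $g_3^k h g_3^{-k} = \varphi_2^k(h)$ and $\varphi_2 = \beta_1$, so that $\varphi_2(g_1)=g_1^{-1}$ and $\varphi_2(g_2)=g_1 g_2$. For \ref{b1:1} I would note that both $g_2$ and $g_3$ conjugate $g_1$ to $g_1^{-1}$ (the relations $g_2 g_1 g_2^{-1}=g_1^{-1}$ and $g_3 g_1 g_3^{-1}=g_1^{-1}$ have identical form), so the computation is verbatim that of \Cref{2stepexponents}\ref{2step:2}: since conjugation by $g_i$ sends $g_1$ to $g_1^{-1}$, we get $g_i^a g_1^b = (g_1^{(-1)^a})^b g_i^a = g_1^{b(-1)^a} g_i^a$ for $i=2,3$. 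For \ref{b1:2} the key observation is that $\varphi_2^2(g_2)=\varphi_2(g_1 g_2)=g_1^{-1}\cdot g_1 g_2 = g_2$, whence $\varphi_2^a(g_2)=g_1^{\mu(a)}g_2$. Then $g_3^a g_2^b g_3^{-a}=(\varphi_2^a(g_2))^b=(g_1^{\mu(a)}g_2)^b$, and since the $g_2$-exponent is $1$ (odd), \Cref{2stepexponents}\ref{2step:3} gives $(g_1^{\mu(a)}g_2)^b=g_1^{\mu(a)\mu(b)}g_2^b$, which rearranges to \ref{b1:2}.

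The main work is \ref{b1:3}. First I would establish a general multiplication rule for normal words, obtained by pushing $g_3^c$ rightward past $g_1^{a'}g_2^{b'}$ using \ref{b1:1} and \ref{b1:2}, and then pushing $g_2^b$ past the resulting $g_1$-power:
\[
    (g_1^a g_2^b g_3^c)(g_1^{a'} g_2^{b'} g_3^{c'}) = g_1^{\,a + (-1)^{b+c}a' + (-1)^b \mu(c)\mu(b')}\, g_2^{\,b+b'}\, g_3^{\,c+c'}.
\]
This rule shows that in $w^m \coloneqq (g_1^a g_2^b g_3^c)^m$ the $g_2$- and $g_3$-exponents are simply $mb$ and $mc$, so we may write $w^m = g_1^{A_m} g_2^{mb} g_3^{mc}$; applying the rule to $w^{m+1}=w^m\cdot w$ yields, for every $m\in\Z$, the recursion
\[
    A_{m+1} = A_m + (-1)^{m(b+c)} a + (-1)^{mb}\mu(b)\mu(mc), \qquad A_0 = 0.
\]

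I would then verify that each of the three proposed closed forms satisfies this recursion, splitting according to the parities of $b$ and $c$: when $b,c$ are even the second term is $a$ and the third vanishes, matching $A_m=ma$; when $b,c$ are odd the increment collapses to $a-\mu(m)$, matching $A_m=ma-\lfloor m/2\rfloor$ via the identity $\lfloor (m+1)/2\rfloor-\lfloor m/2\rfloor=\mu(m)$; and when $b+c$ is odd the third term always vanishes (one of $\mu(b)$, $\mu(mc)$ is $0$) and the increment is $(-1)^m a=(\mu(m+1)-\mu(m))a$, matching $A_m=\mu(m)a$. Since the recursion above holds for all $m\in\Z$ (it is just $w^{m+1}=w^m w$) and the closed forms agree at $m=0$, the formula follows for all integers without a separate argument for negative exponents.

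The main obstacle I anticipate is the $b,c$ odd case: tracking the drift in the $g_1$-exponent and recognizing that it is exactly $-\lfloor m/2\rfloor$ requires the floor identity above, and one must check that this identity (and hence the closed form) holds for negative as well as positive $m$. Deriving the multiplication rule correctly — in particular getting the sign $(-1)^{b+c}$ and the cross term $(-1)^b\mu(c)\mu(b')$ right — is the other delicate point, since a single sign slip there would propagate through the entire induction; I would therefore confirm the rule on small cases before running the recursion.
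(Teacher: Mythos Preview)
Your argument is correct. Parts \ref{b1:1} and \ref{b1:2} are essentially the paper's proof verbatim: both invoke the conjugation description $g_3^a h g_3^{-a}=\varphi_2^a(h)$ together with \Cref{2stepexponents}, and your observation $\varphi_2^2(g_2)=g_2$ is exactly what the paper uses implicitly when it writes $(\varphi_2^a(g_2))^b=(g_1^{\mu(a)}g_2)^b$.

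For \ref{b1:3} your route genuinely differs. The paper proceeds by direct induction on $m$ in each parity regime separately: in the even/even and odd/odd cases it checks small $m$, inducts for $m\ge 2$, and then handles $m<0$ by rewriting $(g_1^ag_2^bg_3^c)^{-l}$ as $((g_1^ag_2^bg_3^c)^{-1})^l$ and reducing to the positive case; in the $b+c$ odd case it squares first and then splits $m=2k$ versus $m=2k+1$. You instead derive once and for all the normal-form product rule, extract the recursion $A_{m+1}=A_m+(-1)^{m(b+c)}a+(-1)^{mb}\mu(b)\mu(mc)$, and verify that each closed form solves it. This buys you uniformity: negative $m$ requires no separate computation of $w^{-1}$, since the recursion runs backward as well as forward and the floor identity $\lfloor(m+1)/2\rfloor-\lfloor m/2\rfloor=\mu(m)$ holds for all integers. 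The paper's approach, by contrast, is more hands-on and self-contained in each case, at the cost of repeating the inversion step three times. One small point worth making explicit in your write-up: the fact that the $g_2$- and $g_3$-exponents of $w^m$ are $mb$ and $mc$ for \emph{all} $m\in\Z$ (not just $m\ge 0$) follows because your product rule shows these exponents are additive, i.e.\ the projection $g_1^xg_2^yg_3^z\mapsto(y,z)$ is a homomorphism to $\Z^2$; stating this justifies invoking the recursion for negative $m$ without first computing $w^{-1}$.
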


\begin{proof}
    Recall that $\varphi_2 \in \Aut(G_2)$ is given by $\varphi_2(g_1) = g_3g_1g_3^{-1} = g_1^{-1}$ and $\varphi_2(g_2) = g_3g_2g_3^{-1} = g_1g_2$. Relation \ref{b1:1} holds for $i = 2$ by \Cref{2stepexponents}. To show that it is true for $i = 3$, consider the multiplicative structure of $G_3$ in terms of $\varphi_2$:
    \[
        g_3^ag_1^b
        = g_3^ag_1^bg_3^{-a}g_3^a
        = \varphi_{2}^a(g_1^b)g_3^a
        = (\varphi_{2}^a(g_1))^bg_3^a
        = (g_1^{(-1)^a})^bg_3^a
        = g_1^{b(-1)^a}g_3^a.
    \]
    
    Similarly, we prove \ref{b1:2} by considering the multiplicative structure of $G_3$ in terms of $\varphi_2$ using the exponential rules defined in \Cref{2stepexponents}:
    \begin{align*}
        g_3^ag_2^b
        = g_3^ag_2^bg_3^{-a}g_3^a
        = \varphi_2^a(g_2^b)g_3^a
        = (\varphi_2^a(g_2))^bg_3^a
        = (g_1^{\mu(a)}g_2)^bg_3^a
        = g_1^{\mu(a)\mu(b)}g_2^bg_3^a.
    \end{align*}

    If $b, c$ are even, then it is clear that \ref{b1:3} holds for $m = 0, \pm 1$. For $m < -1$, or $m = -l$ for some positive integer $l > 1$, we have
    \[
        (g_1^ag_2^bg_3^c)^m 
        = (g_1^ag_2^bg_3^c)^{-l}
        = ((g_1^ag_2^bg_3^c)^{-1})^l
        = (g_1^{-a}g_2^{-b}g_3^{-c})^l
        = (g_1^{-a}g_2^{-b}g_3^{-c})^{-m}.
    \]
    So this reduces to the case where $m \geq 1$, which we prove by induction on $m$. Using \ref{b1:1} and \ref{b1:2}, when $m = 2$, $(g_1^ag_2^bg_3^c)^2 = g_1^ag_2^bg_3^cg_1^ag_2^bg_3^c = g_1^{2a}g_2^{2b}g_3^{2c}$. Now, assume that $(g_1^ag_2^bg_3^c)^m = g_1^{ma}g_2^{mb}g_3^{mc}$. Then
    \[
        (g_1^ag_2^bg_3^c)^{m+1}
        = g_1^{ma}g_2^{mb}g_3^{mc}g_1^ag_2^bg_3^c
        = g_1^{(m+1)a}g_2^{(m+1)b}g_3^{(m+1)c}.
    \]
    
    Now, let $b, c$ be odd; it is clear that \ref{b1:3} holds for $m = 0$ and $m = 1$. We prove that it holds for $m \geq 2$ by induction on $m$. When $m = 2$,
    \[
        (g_1^ag_2^bg_3^c)^2
        = g_1^ag_2^bg_3^cg_1^ag_2^bg_3^c
        = g_1^{2a}g_2^bg_3^cg_2^bg_3^c
        = g_1^{2a}g_2^bg_1g_2^bg_3^{2c}
        = g_1^{2a - 1} g_2^{2b}g_3^{2c}.
    \]
    Assume that $(g_1^ag_2^bg_3^c)^m = g_1^{ma - \lfloor m/2 \rfloor}g_2^{mb}g_3^{mc}$. If $m$ is even, then
    \begin{align*}
        (g_1^ag_2^bg_3^c)^{m+1}
        = g_1^{ma - m/2}g_2^{mb}g_3^{mc}g_1^ag_2^bg_3^c
        = g_1^{(m+1)a - m/2}g_2^{(m+1)b}g_3^{(m+1)c},
    \end{align*}
    and if $m$ is odd, then
    \begin{align*}
        (g_1^ag_2^bg_3^c)^{m+1}
        = g_1^{ma - (m-1)/2}g_2^{mb}g_3^{mc}g_1^ag_2^bg_3^c
        &= g_1^{(m+1)a - (m-1)/2}g_2^{mb}g_3^{mc}g_2^bg_3^c \\
        &= g_1^{(m+1)a - (m-1)/2}g_2^{mb}g_1g_2^bg_3^{(m+1)c} \\
        &= g_1^{(m+1)a - (m+1)/2}g_2^{(m+1)b}g_3^{(m+1)c}.
    \end{align*}
    If $m \leq -1$, then $m = -l$ for some positive integer $l$, and from above,
    \begin{align*}
        (g_1^ag_2^bg_3^c)^m
        = ((g_1^ag_2^bg_3^c)^{-1})^l
        = (g_1^{1-a}g_2^{-b}g_3^{-c})^l
        = g_1^{-la + l - \lfloor l/2 \rfloor}g_2^{-lb}g_3^{-lc}
        &= g_1^{-la - \lfloor -l/2 \rfloor}g_2^{-lb}g_3^{-lc} \\
        &= g_1^{ma - \lfloor m/2 \rfloor}g_2^{mb}g_3^{mc}.
    \end{align*}
    
    Finally, let $b$ and $c$ have different parity. First, suppose $m$ is even, so $m = 2k$ for some $k \in \Z$. Then
    \[
        (g_1^ag_2^bg_3^c)^m
        = ((g_1^ag_2^bg_3^c)^2)^k
        = (g_2^{2b}g_3^{2c})^k
        = g_2^{2kb}g_3^{2kc}.
    \]
    Now, suppose $m$ is odd, so $m = 2k+1$ for some $k \in \Z$. Using the fact from above that $(g_1^ag_2^bg_3^c)^{2k} = g_2^{2kb}g_3^{2kc}$,
    \[
        (g_1^ag_2^bg_3^c)^m
        = (g_1^ag_2^bg_3^c)^{2k}g_1^ag_2^bg_3^c
        = g_2^{2kb}g_3^{2kc}g_1^ag_2^bg_3^c
        = g_1^ag_2^{(2k+1)b}g_3^{(2k+1)c}.
    \]
\end{proof}

Using the relations defined in \Cref{b1_exponents}, we go on to define $\Aut(G_3)$. 

\begin{theorem}\label{b1_automorphisms}
    Let $G_3 = \langle g_1, g_2, g_3 \mid g_2g_1 = g_1^{-1}g_2, g_3g_1 = g_1^{-1}g_3, g_3g_2 = g_1g_2g_3 \rangle$ be the polycyclic group defined in \ref{defG3_b1}. Set
    \begin{align*}
        \mathcal{A} &= \left\{
            A \left|
            A = 
            \begin{bmatrix}
                2k & 2m+1 \\
                2l+1 & 2n
            \end{bmatrix}
            \right.
            \in \mathrm{GL}(2, \Z) \text{ where } k,l,m,n \in \Z
            \right\}, \\
        \mathcal{B} &= \left\{
            B \left|
            B =
            \begin{bmatrix}
                2k+1 & 2m \\
                2l & 2n+1
            \end{bmatrix}
            \right.
            \in \mathrm{GL}(2, \Z) \text{ where } k,l,m,n \in \Z
        \right\},
    \end{align*}
    and for $a \in \Z$, $A \in \mathcal{A}$, and $B \in \mathcal{B}$, set
    \begin{align*}
        \alpha_{a, A} &=
        \left[
            \begin{array}{cc}
                1 & \begin{matrix} a & a+1 \end{matrix} \\
                \begin{matrix} 0 \\ 0 \end{matrix} & A
            \end{array}
        \right],
        &\beta_{a, A} &=
        \left[
            \begin{array}{cc}
                -1 & \begin{matrix} a & a \end{matrix} \\
                \begin{matrix} 0 \\ 0 \end{matrix} & A
            \end{array}
        \right], \\
        \gamma_{a, B} &=
        \left[
            \begin{array}{cc}
                1 & \begin{matrix} a & a \end{matrix} \\
                \begin{matrix} 0 \\ 0 \end{matrix} & B
            \end{array}
        \right],
        &\delta_{a, B} &=
        \left[
            \begin{array}{cc}
                -1 & \begin{matrix} a & a-1 \end{matrix} \\
                \begin{matrix} 0 \\ 0 \end{matrix} & B
            \end{array}
        \right].
    \end{align*}
    Then
    \begin{align*}
        \Aut(G_3) &= \{ \alpha_{a,A}, \beta_{a,A}, \gamma_{a,B}, \delta_{a,B} \mid a \in \Z, A \in \mathcal{A}, B \in \mathcal{B} \} \\
        \Inn(G_3) &= \{ \gamma_{a, I_2}, \delta_{a, I_2} \mid a \in \Z\} \\
        \Out(G_3) &= \{[\alpha_{0,A}], [\gamma_{0,B}] \mid A \in \mathcal{A}, B \in \mathcal{B} \}.
    \end{align*}
    Moreover, the composition table of $\Out(G_3)$ is given by
    \begin{equation*}
        \begin{tabular}{c|c c}
            $\circ$ & $[\alpha_{0,A'}]$ & $[\gamma_{0,B'}]$ \\
            \cline{1-3}
            $[\alpha_{0,A}]$ & $[\gamma_{0,AA'}]$ & $[\alpha_{0,AB'}]$ \\
            $[\gamma_{0,B}]$ & $[\alpha_{0,BA'}]$ & $[\gamma_{0,BB'}]$ \\
        \end{tabular}
    \end{equation*}
    for $A, A' \in \mathcal{A}$ and $B,B' \in \mathcal{B}$.
\end{theorem}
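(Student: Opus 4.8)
The plan is to exploit the characteristic subgroup $\langle g_1\rangle$. First I would compute the derived subgroup: the defining relations give $[g_2,g_1]=[g_3,g_1]=g_1^{-2}$ and $[g_3,g_2]=g_1$, so every generating commutator lies in the normal subgroup $\langle g_1\rangle$ while $g_1=[g_3,g_2]$ lies in $[G_3,G_3]$; hence $[G_3,G_3]=\langle g_1\rangle$ is characteristic and $G_3^{\mathrm{ab}}=G_3/\langle g_1\rangle\cong\Z^2$ is freely generated by the images of $g_2,g_3$. Consequently every $\psi\in\Aut(G_3)$ restricts to $\pm1$ on $\langle g_1\rangle$, so $\psi(g_1)=g_1^\epsilon$ with $\epsilon\in\{1,-1\}$, and induces an automorphism $\bar\psi\in\mathrm{GL}(2,\Z)$ on $G_3^{\mathrm{ab}}$. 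Writing $\psi(g_2)=g_1^{p}g_2^{r}g_3^{s}$ and $\psi(g_3)=g_1^{q}g_2^{t}g_3^{u}$, the matrix $\bar\psi=\left[\begin{smallmatrix}r&t\\s&u\end{smallmatrix}\right]$ is exactly the lower-right block, i.e. the candidate $A$ or $B$, and $\psi\mapsto\bar\psi$ is a homomorphism $\Aut(G_3)\to\mathrm{GL}(2,\Z)$.

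Next I would impose the three defining relations. Preserving $g_2g_1=g_1^{-1}g_2$ and $g_3g_1=g_1^{-1}g_3$ means conjugation by $\psi(g_2)$ and $\psi(g_3)$ must invert $g_1^\epsilon$; since conjugation by $g_2,g_3$ inverts $g_1$ and by $g_1$ fixes it, one gets $g_1\mapsto g_1^{(-1)^{r+s}}$ and $g_1\mapsto g_1^{(-1)^{t+u}}$, forcing $r+s$ and $t+u$ odd. Together with $\det\bar\psi=\pm1$, an elementary parity check eliminates the mixed (even determinant) cases and shows $\bar\psi\in\mathcal{A}\cup\mathcal{B}$, lying in $\mathcal{A}$ when the diagonal is even and in $\mathcal{B}$ when it is odd. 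The relation $g_3g_2=g_1g_2g_3$, equivalently $[\psi(g_3),\psi(g_2)]=g_1^\epsilon$, is the crux: expanding $\psi(g_3)\psi(g_2)$ and $\psi(g_1)\psi(g_2)\psi(g_3)$ into normal form via \Cref{b1_exponents} and equating $g_1$-exponents (using $r+s,t+u$ odd) yields
\[
2(q-p)=\epsilon+(-1)^{r}\mu(s)\mu(t)-(-1)^{t}\mu(u)\mu(r).
\]
Evaluating in the two cases pins down $q-p$ as $1,0$ for $\mathcal{A}$ with $\epsilon=1,-1$ and as $0,-1$ for $\mathcal{B}$ with $\epsilon=1,-1$, which is precisely the top row of $\alpha_{a,A},\beta_{a,A},\gamma_{a,B},\delta_{a,B}$ upon setting $a=p$. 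For the converse I would run this backwards to see each listed matrix preserves all three relations, and invertibility follows because $A,B\in\mathrm{GL}(2,\Z)$ make $\bar\psi$ bijective while $\psi(g_1)=g_1^{\pm1}$ is bijective on $\langle g_1\rangle$, so a five-lemma-style argument on $1\to\langle g_1\rangle\to G_3\to\Z^2\to1$ gives $\psi\in\Aut(G_3)$.

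For $\Inn(G_3)$ I would observe that inner automorphisms act trivially on $G_3^{\mathrm{ab}}$, so $\bar{\imath}_h=I_2\in\mathcal{B}$ and $\imath_h\in\{\gamma_{a,I_2},\delta_{a,I_2}\}$, with $\epsilon=(-1)^{b+c}$ for $h=g_1^ag_2^bg_3^c$; a direct conjugation computation then shows the $g_1$-exponent of $\imath_h(g_2)$ ranges over all of $\Z$ and that both parities of $b+c$ arise, giving $\Inn(G_3)=\{\gamma_{a,I_2},\delta_{a,I_2}\mid a\in\Z\}$ exactly. Since this set is precisely the kernel of $\psi\mapsto\bar\psi$, the first isomorphism theorem descends the map to an isomorphism $\Out(G_3)\xrightarrow{\sim}\mathcal{A}\cup\mathcal{B}$ (a subgroup of $\mathrm{GL}(2,\Z)$, being the preimage of an order-$2$ subgroup of $\mathrm{GL}(2,\Z/2)$). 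The composition table then follows from matrix multiplication in $\mathcal{A}\cup\mathcal{B}$ together with the mod-$2$ facts $\mathcal{A}\cdot\mathcal{A},\mathcal{B}\cdot\mathcal{B}\subseteq\mathcal{B}$ and $\mathcal{A}\cdot\mathcal{B},\mathcal{B}\cdot\mathcal{A}\subseteq\mathcal{A}$, which fix the labels $\gamma$ versus $\alpha$ on the products.

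The main obstacle is the relation-$3$ computation: collapsing a product of three general normal words back into normal form demands careful bookkeeping of the parity factors $(-1)^{(\cdot)}$ and $\mu(\cdot)$ supplied by \Cref{b1_exponents}, and it is exactly there that the even/odd dichotomy separating $\mathcal{A}$ from $\mathcal{B}$ and the precise offsets $0,\pm1$ in the top rows are forced. A secondary point needing care is confirming the parametrization is faithful—that the four families are disjoint and that $\epsilon$, the parity type, and $a$ (recovered as the $g_1$-exponent of $\psi(g_2)$) are genuinely recoverable—so that the stated equality of sets, rather than mere containment, is established.
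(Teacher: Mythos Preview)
Your proposal is correct and reaches the same conclusion as the paper, but the framing is more structural than the paper's. The paper proceeds purely computationally: it writes $\psi$ as a generic $3\times 3$ matrix, imposes each relation in turn via \Cref{b1_exponents} to obtain $b=c=0$, the parity constraints on $e,f,h,i$, and the key equation $\mu(e)\mu(i)(-1)^h-\mu(f)\mu(h)(-1)^e=a+2d-2g$ (equivalent to your displayed equation), and then deduces $a=\pm 1$ only after invoking $\psi\circ\psi^{-1}=\mathrm{id}$. You instead begin by identifying $[G_3,G_3]=\langle g_1\rangle$ as characteristic, which immediately yields $\psi(g_1)=g_1^{\pm 1}$ and the homomorphism $\Aut(G_3)\to\mathrm{GL}(2,\Z)$; this shortcuts the paper's first-relation and inverse-map computations. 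Likewise, for $\Out(G_3)$ the paper verifies the coset table by hand, whereas you obtain it by the first isomorphism theorem after noting that $\Inn(G_3)$ is exactly the kernel of $\psi\mapsto\bar\psi$ and that the image $\mathcal{A}\cup\mathcal{B}$ is the preimage of an order-$2$ subgroup of $\mathrm{GL}(2,\Z/2)$. The unavoidable technical core---collapsing the third relation to the parity-dependent identity that fixes the offsets $0,\pm 1$ in the top row---is the same in both arguments, and your case evaluation matches the paper's four cases for $(e,f,h,i)$. Your approach buys a cleaner explanation of \emph{why} $\Out(G_3)$ is a matrix group and why the composition table is just block multiplication; the paper's approach is more self-contained in that it never needs to verify the derived subgroup or argue surjectivity onto $\mathcal{A}\cup\mathcal{B}$ separately.
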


\begin{proof}
    Let
    \[
        \psi =
        \begin{bmatrix}
            a & d & g \\
            b & e & h \\
            c & f & i
        \end{bmatrix}
        \in \Aut(G_3).
    \]
    Using the multiplicative rules described in \Cref{b1_exponents}, we consider the relations of $G_3$. Since the relations $\psi(g_2)\psi(g_1) = \psi(g_1)^{-1}\psi(g_2)$ must be preserved, we obtain
    \begin{align*}
        g_1^{d+a(-1)^{f+e} + \mu(b) \mu(f)(-1)^e}g_2^{b+e}g_3^{c+f}
        &= g_1^{(d-a)(-1)^{-b-c} + \mu(e-b) \mu(c)}g_2^{e-b}g_3^{f-c}.
    \end{align*}
    Thus, $b=c=0$ and it follows that $a(-1)^{e+f} = -a$ and therefore $e$ and $f$ must have different parity.
    
    Since the relation $\psi(g_3)\psi(g_1) = \psi(g_1)^{-1}\psi(g_3)$ must be preserved, we obtain
    \[
        g_1^{a(-1)^{h+i}}g_2^hg_3^i
        = g_1^{-a}g_2^hg_3^i
    \]
    and it follows that $h$ and $i$ must have different parity.
    
    Since the relation $\psi(g_3)\psi(g_2) = \psi(g_1)\psi(g_2)\psi(g_3)$ must be preserved, we obtain
    \[
        g_1^{g-d+ \mu(e) \mu(i)(-1)^h}g_2^{e+h}g_3^{f+i}
        = g_1^{a+d-g + \mu(f) \mu(h)(-1)^e}g_2^{e+h}g_3^{f+i},
    \]
    so we have that
    \[
        \mu(e) \mu(i)(-1)^h - \mu(f) \mu(h)(-1)^e = a + 2d - 2g.
    \]
    If we set
    \[
        \psi^{-1} =
        \begin{bmatrix}
            a' & d' & g' \\
            b' & e' & h' \\
            c' & f' & i'
        \end{bmatrix},
    \]
    then for the same reasons as above, we must have that $b'=c'=0$, $e'$ and $f'$ have different parity, and $h'$ and $i'$ have different parity. For any $\psi$, the following identities must also hold:
    \begin{align}
        \psi^{-1}(\psi(g_1)) &= g_1 \label{b1_g1id} \\
        \psi^{-1}(\psi(g_2)) &= g_2 \label{b1_g2id} \\
        \psi^{-1}(\psi(g_3)) &= g_3 \label{b1_g3id}
    \end{align}
    If $e$ and $h$ have the same parity, it becomes clear that there exist no $d, g, a' \in \Z$ such that \eqref{b1_g1id} is true, so we must have that $e$ and $h$ have different parity (and thus $f$ and $i$ must have different parity as well). Then, in any case, we have that $a = -2(d-g) \pm 1$, so from \eqref{b1_g1id}, $a = a' = \pm 1$.
    
    If $e, i$ are even and $f, h$ are odd, then $a = a' = 1$ and $g = d+1$ or $a = a' = -1$ and $d = g$. If $e, i$ are odd and $f, h$ are even, then $a = a' = 1$ and $d = g$ or $a = a' = -1$ and $g = d-1$. Thus, letting
    \[
        A =
        \begin{bmatrix}
            2k & 2m+1 \\
            2l+1 & 2n
        \end{bmatrix}
        \quad
        \text{ and }
        \quad
        B =
        \begin{bmatrix}
            2k+1 & 2m \\
            2l & 2n+1
        \end{bmatrix}
    \]
    such that $k,l,m,n \in \Z$ and $A,B \in \mathrm{GL}(2, \Z)$, we must have that $\psi \in \{\alpha_{d, A}, \beta_{d, A}, \gamma_{d, B}, \delta_{d, B}\}$. It can be easily verified that these values of $\psi$ are consistent with both \eqref{b1_g2id} and \eqref{b1_g3id} as well.
    
    Further, it is easy to check that $\alpha_{a, A}, \beta_{a, A}, \gamma_{a, B}, \delta_{a, B} \in \Aut(G_3)$ for all $a \in \Z$ and all $A, B$ as defined above, and that
    \[
        \alpha_{a,A}^{-1} = \alpha_{-a-1,A^{-1}}, \quad
        \beta_{a,A}^{-1} = \beta_{a,A^{-1}}, \quad
        \gamma_{a,B}^{-1} = \gamma_{-a,B^{-1}}, \quad
        \delta_{a,B}^{-1} = \delta_{a,B^{-1}}.
    \]
    
    To identify $\Inn(G_3)$, let $\iota_h \in \Inn(G_3)$ where $h = g_1^ag_2^bg_3^c$ for some $a, b, c \in \Z$. It follows that
    \begin{align*}
        g_1^ag_2^bg_3^cg_1(g_1^ag_2^bg_3^c)^{-1}
        &= g_1^{(-1)^{b+c}} \\
        g_1^ag_2^bg_3^cg_2(g_1^ag_2^bg_3^c)^{-1}
        &= g_1^{2a + \mu(c)(-1)^b}g_2 \\
        g_1^ag_2^bg_3^cg_3(g_1^ag_2^bg_3^c)^{-1}
        &= g_1^{2a - \mu(b)}g_3.
    \end{align*}
    Therefore,
    \[
        \iota_h \in
        \left\{
            \begin{bmatrix}
                1 & 2a-1 & 2a-1 \\
                0 & 1 & 0 \\
                0 & 0 & 1
            \end{bmatrix},
            \begin{bmatrix}
                -1 & 2a & 2a-1 \\
                0 & 1 & 0 \\
                0 & 0 & 1
            \end{bmatrix},
            \begin{bmatrix}
                -1 & 2a+1 & 2a \\
                0 & 1 & 0 \\
                0 & 0 & 1
            \end{bmatrix},
            \begin{bmatrix}
                1 & 2a & 2a \\
                0 & 1 & 0 \\
                0 & 0 & 1
            \end{bmatrix}
        \right\}
    \]
    and $\Inn(G_3) = \{ \gamma_{a, I_2}, \delta_{a, I_2} \mid a \in \Z\}$. It can be verified that for any $a,b \in \Z$ and any $A, B$ as defined above,
    \begin{equation*}
        \begin{tabular}{c|c c c c}
            $\circ$ & $\alpha_{b,A}$ & $\beta_{b,A}$ & $\gamma_{b,B}$ & $\delta_{b,B}$  \\
            \cline{1-5}
            $\gamma_{a,I_2}$ & $\alpha_{a+b,A}$ & $\beta_{a+b,A}$ & $\gamma_{a+b,B}$ & $\delta_{a+b,B}$ \\
            $\delta_{a,I_2}$ & $\beta_{a-b,A}$ & $\alpha_{a-b,A}$ & $\delta_{a-b,B}$ & $\gamma_{a-b,B}$
        \end{tabular},
    \end{equation*}
    and thus $\Out(G_3) = \langle [\alpha_{0,A}], [\gamma_{0,B}]\rangle$ with composition structure
    \begin{equation*}
        \begin{tabular}{c|c c}
            $\circ$ & $[\alpha_{0,A'}]$ & $[\gamma_{0,B'}]$ \\
            \cline{1-3}
            $[\alpha_{0,A}]$ & $[\gamma_{0,AA'}]$ & $[\alpha_{0,AB'}]$ \\
            $[\gamma_{0,B}]$ & $[\alpha_{0,BA'}]$ & $[\gamma_{0,BB'}]$ \\
        \end{tabular}.
    \end{equation*}
\end{proof}

\chunk{\label{defG3_a0}}
Consider
\begin{align*}
    G_1 &= \Z \\
    G_2 &= G_1 \rtimes_{\overline{\varphi}_1} \Z
    \quad \text{with} \quad
    \varphi_1 = [-1] \\
    G_3 &= G_2 \rtimes_{\overline{\varphi}_2} \Z
    \quad \text{with} \quad
    \varphi_2 = \begin{bmatrix} 1 & 0 \\ 0 & -1 \end{bmatrix}.
\end{align*}
Note that $G_3$ is well-defined as $\varphi_2 = \alpha_0$ from \Cref{automorphisms}, and that
\[
    \langle g_1, g_2, g_3 \mid g_2g_1 = g_1^{-1}g_2, g_3g_1 = g_1g_3, g_3g_2 = g_2^{-1}g_3 \rangle
\]
is a polycyclic presentation of $G_3$. We begin by defining the multiplicative structure of $G_3$.

\begin{lemma} \label[lemma]{a0_exponents}
    Let $G_3$ be the polycyclic group defined in \ref{defG3_a0}. Then the following hold:
    \begin{arabiclist}
        \item \label{a0:1} $g_i^ag_{i-1}^b = g_{i-1}^{b(-1)^a}g_i^a$ for $i = 2,3$
        \item \label{a0:2} $(g_1^ag_2^bg_3^c)^m =
        \begin{cases}
            g_1^{ma}g_2^{mb}g_3^{mc} &\text{ if } b,c \text{ even} \\
            g_1^{ma}g_2^{\mu(m)b}g_3^{mc} &\text{ if } b \text{ even, } c \text{ odd} \\
            g_1^{\mu(m)a}g_2^{mb}g_3^{mc} &\text{ if } b \text{ odd, } c \text{ even} \\
            g_1^{\mu(m)a}g_2^{\mu(m)b}g_3^{mc} &\text{ if } b,c \text{ odd.}
        \end{cases}$
    \end{arabiclist}
\end{lemma}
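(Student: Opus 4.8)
The plan is to follow the template of \Cref{2stepexponents} and \Cref{b1_exponents}, exploiting that $G_3 = G_2 \rtimes_{\overline{\varphi}_2}\Z$ with $\varphi_2$ acting by $\varphi_2(g_1) = g_3 g_1 g_3^{-1} = g_1$ and $\varphi_2(g_2) = g_3 g_2 g_3^{-1} = g_2^{-1}$. For part \ref{a0:1}, the case $i=2$ is literally part \ref{2step:2} of \Cref{2stepexponents}, since the subgroup $G_2$ is unchanged. For $i=3$ I would write $g_3^a g_2^b = g_3^a g_2^b g_3^{-a} g_3^a = \varphi_2^a(g_2^b) g_3^a = (\varphi_2^a(g_2))^b g_3^a$ and substitute $\varphi_2^a(g_2) = g_2^{(-1)^a}$ to conclude $g_3^a g_2^b = g_2^{b(-1)^a} g_3^a$. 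Along the way I would also record the auxiliary relation $g_3^a g_1^b = g_1^b g_3^a$ coming from $\varphi_2(g_1) = g_1$, since the commuting of $g_1$ and $g_3$ is used repeatedly below.

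For part \ref{a0:2} the workhorse is the square. Using the three collection relations just established, I would slide the rightmost $g_3^c$ leftward past $g_1^a$ (commuting) and then past $g_2^b$ (picking up a sign $(-1)^c$), and slide $g_2^b$ past $g_1^a$ (picking up $(-1)^b$), to obtain
\[
    (g_1^a g_2^b g_3^c)^2 = g_1^{a(1+(-1)^b)} g_2^{b(1+(-1)^c)} g_3^{2c}.
\]
Specializing the parities of $b$ and $c$ collapses the square to $g_1^{2a}g_2^{2b}g_3^{2c}$, $g_1^{2a}g_3^{2c}$, $g_2^{2b}g_3^{2c}$, or $g_3^{2c}$ in the four respective cases, which already verifies the claimed formula at $m = 2$.

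From the square I would obtain all $m$ at once. Because $(g_1^a g_2^b g_3^c)^2$ is a word in which each of $g_1,g_2,g_3$ occurs with an even exponent, its factors commute pairwise (every twist $(-1)^{2b}$, $(-1)^{2c}$ is trivial); hence $(g_1^a g_2^b g_3^c)^{2k} = ((g_1^a g_2^b g_3^c)^2)^k$ is computed by scaling each even exponent by $k$ for every $k \in \Z$, and the odd powers $m = 2k+1$ follow by right-multiplying this by a single factor $g_1^a g_2^b g_3^c$ and collecting once more. Reading off the four parity cases then yields the stated formula, the factor $\mu(m)$ appearing precisely because the squaring annihilates the relevant alternating exponent, which is restored only by the extra factor in the odd powers. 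The main obstacle is nothing more than bookkeeping — keeping the four parity branches and the even/odd split organized — but since the squaring identity removes all sign-twisting, no branch requires a separate argument for negative $m$, in contrast to the inductions used for \Cref{2stepexponents} and \Cref{b1_exponents}.
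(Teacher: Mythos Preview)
Your argument is correct. The paper actually omits the proof of this lemma, remarking that for the groups after \ref{defG3_b1} ``the proofs of the multiplicative structure were omitted''; the implied template is the case-by-case induction carried out in \Cref{b1_exponents}. Your route is a mild but genuine streamlining of that template: instead of inducting separately in each parity branch and handling negative $m$ by inversion, you compute the square once as $g_1^{a(1+(-1)^b)} g_2^{b(1+(-1)^c)} g_3^{2c}$, observe that all three exponents are even (so the three factors commute pairwise), and then read off $(g_1^a g_2^b g_3^c)^{2k}$ for every $k\in\Z$ by scaling, with the odd case obtained by one further collection. This uniform treatment works here precisely because $\varphi_2$ fixes $g_1$ and sends $g_2$ to $g_2^{-1}$, so no odd $g_1$-exponent can appear in the square; by contrast, in \Cref{b1_exponents} the square can be $g_1^{2a-1}g_2^{2b}g_3^{2c}$ when $b,c$ are odd, which is why the paper resorts to induction there. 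What you gain is a single computation covering all four branches and all $m\in\Z$ at once; what the inductive approach buys is that it transfers verbatim to the other $G_3$'s where the ``even square'' phenomenon fails.
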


Using the relations defined in \Cref{a0_exponents}, we go on to define $\Aut(G_3)$.

\begin{theorem} \label{a0_automorphisms}
    Let $G_3 = \langle g_1, g_2, g_3 \mid g_2g_1 = g_1^{-1}g_2, g_3g_1 = g_1g_3, g_3g_2 = g_2^{-1}g_3 \rangle$ be the polycyclic group defined in \ref{defG3_a0}. For $a, b, c \in \Z$, set
    \begin{align*}
        \alpha_{a, b, c} &=
        \begin{bmatrix}
            1 & a & 0 \\
            0 & 1 & 2b \\
            0 & 0 & (-1)^c
        \end{bmatrix},
        &\beta_{a, b, c} &=
        \begin{bmatrix}
            1 & a & 0 \\
            0 & -1 & 2b \\
            0 & 0 & (-1)^c
        \end{bmatrix}, \\
        \gamma_{a, b, c} &=
        \begin{bmatrix}
            -1 & a & 0 \\
            0 & 1 & 2b \\
            0 & 0 & (-1)^c
        \end{bmatrix},
        &\delta_{a, b, c} &=
        \begin{bmatrix}
            -1 & a & 0 \\
            0 & -1 & 2b \\
            0 & 0 & (-1)^c
        \end{bmatrix}.
    \end{align*}
    Then
    \begin{align*}
        \Aut(G_3) &= \{\alpha_{a, b, c}, \beta_{a, b, c}, \gamma_{a, b, c}, \delta_{a, b, c} \mid a, b \in \Z, c \in \Z_2 \} \\
        \Inn(G_3) &= \{\alpha_{2a, b, 0}, \beta_{2a, b, 0}, \gamma_{2a, b, 0}, \delta_{2a, b, 0} \mid a, b \in \Z \} \\
        \Out(G_3) &= \{[\alpha_{0,0,0}], [\alpha_{1,0,0}], [\alpha_{0,0,1}], [\alpha_{1,0,1}] \}.
    \end{align*}
    Moreover, the composition table of $\Out(G_3)$ is given by
    \begin{equation*}
        \begin{tabular}{c| c c}
            $\circ$ & $[\alpha_{0,0,b}]$ & $[\alpha_{1,0,b}]$  \\
            \cline{1-3}
            $[\alpha_{0,0,a}]$ & $[\alpha_{0,0,a+b}]$ & $[\alpha_{1,0,a+b}]$ \\
            $[\alpha_{1,0,a}]$ & $[\alpha_{1,0,a+b}]$ & $[\alpha_{0,0,a+b}]$
        \end{tabular}
    \end{equation*}
    for $a,b \in \Z_2$.
\end{theorem}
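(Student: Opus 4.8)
The plan is to follow the same strategy used for \Cref{b1_automorphisms}, exploiting the normal-form rules of \Cref{a0_exponents}. First I would write an arbitrary candidate
\[
    \psi = \begin{bmatrix} a & d & g \\ b & e & h \\ c & f & i \end{bmatrix},
\]
so that $\psi(g_1) = g_1^a g_2^b g_3^c$, $\psi(g_2) = g_1^d g_2^e g_3^f$, and $\psi(g_3) = g_1^g g_2^h g_3^i$, and then impose that $\psi$ preserve the three defining relations $g_2 g_1 = g_1^{-1} g_2$, $g_3 g_1 = g_1 g_3$, and $g_3 g_2 = g_2^{-1} g_3$. Rewriting both sides of each relation into normal form with \Cref{a0_exponents} and comparing the exponents of $g_1, g_2, g_3$ yields a system of scalar and parity equations in the nine entries.

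The next stage is to solve that system. Comparing exponents relation-by-relation should force the below-diagonal entries $b, c, f$ and the corner entry $g$ to vanish, force the $(2,3)$-entry $h$ to be even, and impose parity links among the remaining entries. I would then use the three identity conditions $\psi^{-1}(\psi(g_j)) = g_j$ (equivalently, invertibility over $\mathrm{GL}(3,\Z)$ in this twisted sense) to conclude that the diagonal entries satisfy $a, e = \pm 1$ and $i = \pm 1$, which I record as $i = (-1)^c$. At this point the surviving data are the sign of $a$, the sign of $e$, the free integer $d$ (the parameter ``$a$'' of the statement), the even integer $h = 2b$, and $c \in \Z_2$ — exactly the four families $\alpha_{a,b,c}, \beta_{a,b,c}, \gamma_{a,b,c}, \delta_{a,b,c}$, with $\alpha/\beta$ versus $\gamma/\delta$ recording the sign of the $g_1$-component of $\psi(g_1)$ and $\alpha/\gamma$ versus $\beta/\delta$ recording the sign of the $g_2$-component of $\psi(g_2)$. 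I would then verify the converse, that each of the four families genuinely defines an automorphism, by checking the relations are preserved and exhibiting the inverse formulas, which are routine to read off.

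To compute $\Inn(G_3)$ I would conjugate the generators by a general element $h = g_1^p g_2^q g_3^r$, using \Cref{a0_exponents} to obtain
\[
    \iota_h(g_1) = g_1^{(-1)^q}, \quad \iota_h(g_2) = g_1^{2p(-1)^q} g_2^{(-1)^r}, \quad \iota_h(g_3) = g_2^{2q} g_3,
\]
so that $\iota_h$ is the matrix with diagonal $((-1)^q, (-1)^r, 1)$, $(1,2)$-entry $2p(-1)^q$, and $(2,3)$-entry $2q$. Reading off the four sign patterns of $((-1)^q,(-1)^r)$ shows $\iota_h$ ranges over exactly $\{\alpha_{2a,b,0}, \beta_{2a,b,0}, \gamma_{2a,b,0}, \delta_{2a,b,0}\}$, i.e. the automorphisms with $c = 0$ and even $(1,2)$-entry.

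Finally I would pass to $\Out(G_3) = \Aut(G_3)/\Inn(G_3)$. The key observation is that $\Inn(G_3)$ already contains all four types $\alpha, \beta, \gamma, \delta$ and absorbs the entire parameter $b$ together with the even part of $a$, so modulo $\Inn(G_3)$ only the parity of $a$ and the value of $c$ survive; in particular every class has an $\alpha$-type representative, giving the four cosets $[\alpha_{0,0,0}], [\alpha_{1,0,0}], [\alpha_{0,0,1}], [\alpha_{1,0,1}]$. Multiplying these representatives with the rules of \Cref{a0_exponents} (the $(-1)^c$ factor in the $(3,3)$-slot makes the $c$-coordinates add modulo $2$, while the $(1,2)$-parities add modulo $2$) produces the stated $\Z_2 \times \Z_2$ composition table. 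I expect the main obstacle to be the first stage: the parity-dependent, four-case power rule of \Cref{a0_exponents} makes the reduction of the three relations — and especially of $\psi(g_1)^{-1}$, $\psi(g_2)^{-1}$, and the conjugations — to normal form delicate, so the careful bookkeeping of which entries are even and which signs are forced is where the real work lies. Once the four families and $\Inn(G_3)$ are pinned down, the descriptions of $\Inn(G_3)$ and $\Out(G_3)$ follow quickly.
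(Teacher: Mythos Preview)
Your approach is essentially identical to the paper's: set up a generic $3\times 3$ matrix $\psi$, impose the three defining relations via \Cref{a0_exponents} to force $b=c=f=g=0$, $h$ even, and the diagonal entries odd, then use $\psi\psi^{-1}=\mathrm{id}$ to pin the diagonal to $\pm 1$; the computation of $\Inn(G_3)$ by conjugating the generators and the passage to $\Out(G_3)\cong\Z_2\times\Z_2$ also match the paper exactly.

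One small computational slip: in your inner-automorphism formula you write $\iota_h(g_2)=g_1^{2p(-1)^q}g_2^{(-1)^r}$, but the correct exponent is simply $2p$ (the paper obtains $g_1^{2a}g_2^{(-1)^c}$). This does not affect your description of $\Inn(G_3)$, since $2p(-1)^q$ still ranges over all even integers, but you should fix the formula when writing it up.
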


\begin{proof}
    Let
    \[
        \psi =
        \begin{bmatrix}
            a & d & g \\
            b & e & h \\
            c & f & i
        \end{bmatrix}
        \in \Aut(G_3)
        \quad \text{ with } \quad
        \psi^{-1} =
        \begin{bmatrix}
            a' & d' & g' \\
            b' & e' & h' \\
            c' & f' & i'
        \end{bmatrix}.
    \]
    As the relations of $G_3$ must be preserved, we obtain $b, b', c, c', f, f', g, g' = 0$, $e, e', i, i'$ are odd, and $h, h'$ are even. From the identities $g_1 = \psi(\psi^{-1}(g_1))$, $g_2 = \psi(\psi^{-1}(g_2))$, and $g_3 = \psi(\psi^{-1}(g_3))$, we obtain $a = a' = \pm 1$, $e = e' = \pm 1$, $i = i' = \pm 1$. It also follows that if $a = 1$ then $d = -d'$ while if $a = -1$ then $d = d'$, and that if $e = 1$ then $h = -h'$ while if $e = -1$ then $h = h'$. So we have $\psi \in \{\alpha_{a,b,c}, \beta_{a,b,c}, \gamma_{a,b,c}, \delta_{a,b,c}\}$ for $a,b,c \in \Z$.
    
    Further, it is easy to check that $\alpha_{a,b,c}, \beta_{a,b,c}, \gamma_{a,b,c}, \delta_{a,b,c} \in \Aut(G_3)$ for all $a, b, c \in \Z$ and that
    \[
        \alpha_{a,b,c}^{-1} = \alpha_{-a,-b,c}, \quad \beta_{a,b,c}^{-1} = \beta_{-a,b,c}, \quad \gamma_{a,b,c}^{-1} = \gamma_{a,-b,c}, \quad \delta_{a,b,c}^{-1} = \delta_{a,b,c}.
    \]
    
    To identify $\Inn(G_3)$, let $\iota_h \in \Inn(G_3)$ where $h = g_1^ag_2^bg_3^c$ for some $a, b, c \in \Z$. It follows that
    \begin{align*}
        g_1^ag_2^bg_3^cg_1(g_1^ag_2^bg_3^c)^{-1}
        &= g_1^{(-1)^b} \\
        g_1^ag_2^bg_3^cg_2(g_1^ag_2^bg_3^c)^{-1}
        &= g_1^{2a}g_2^{(-1)^c} \\
        g_1^ag_2^bg_3^cg_3(g_1^ag_2^bg_3^c)^{-1}
        &= g_2^{2b}g_3.
    \end{align*}
    Therefore, $\Inn(G_3) = \{ \alpha_{2a,b,0}, \beta_{2a,b,0}, \gamma_{2a,b,0}, \delta_{2a,b,0} \}$ and it can be verified that for any $a, b, c, d, e \in \Z$,
    \begin{equation*}
        \begin{tabular}{c|c c c c}
            $\circ$ & $\alpha_{c,d,e}$ & $\beta_{c,d,e}$ & $\gamma_{c,d,e}$ & $\delta_{c,d,e}$  \\
            \cline{1-5}
            $\alpha_{2a,b,0}$ & $\alpha_{2a+c,b+d,e}$ & $\beta_{2a+c,b+d,e}$ & $\gamma_{2a+c,b+d,e}$ & $\delta_{2a+c,b+d,e}$ \\
            $\beta_{2a,b,0}$ & $\beta_{2a+c,b-d,e}$ & $\alpha_{2a+c,b-d,e}$ & $\delta_{2a+c,b-d,e}$ & $\gamma_{2a+c,b-d,e}$ \\
            $\gamma_{2a,b,0}$ & $\gamma_{2a-c,b+d,e}$ & $\delta_{2a-c,b+d,e}$ & $\alpha_{2a-c,b+d,e}$ & $\beta_{2a-c,b+d,e}$ \\
            $\delta_{2a,b,0}$ & $\delta_{2a-c,b-d,e}$ & $\gamma_{2a-c,b-d,e}$ & $\beta_{2a-c,b-d,e}$ & $\alpha_{2a-c,b-d,e}$
        \end{tabular},
    \end{equation*}
    and thus $\Out(G_3) = \{[\alpha_{0,0,0}], [\alpha_{1,0,0}], [\alpha_{0,0,1}], [\alpha_{1,0,1}]\}$ with composition structure
    \begin{equation*}
        \begin{tabular}{c| c c}
            $\circ$ & $[\alpha_{0,0,b}]$ & $[\alpha_{1,0,b}]$  \\
            \cline{1-3}
            $[\alpha_{0,0,a}]$ & $[\alpha_{0,0,a+b}]$ & $[\alpha_{1,0,a+b}]$ \\
            $[\alpha_{1,0,a}]$ & $[\alpha_{1,0,a+b}]$ & $[\alpha_{0,0,a+b}]$
        \end{tabular}
    \end{equation*}
    where $a,b \in \Z_2$.
\end{proof}

\chunk{\label{defG3_a1}}
Consider
\begin{align*}
    G_1 &= \Z \\
    G_2 &= G_1 \rtimes_{\overline{\varphi}_1} \Z
    \quad \text{with} \quad
    \varphi_1 = [-1] \\
    G_3 &= G_2 \rtimes_{\overline{\varphi}_2} \Z
    \quad \text{with} \quad
    \varphi_2 = \begin{bmatrix} 1 & 1 \\ 0 & -1 \end{bmatrix}.
\end{align*}
Note that $G_3$ is well-defined as $\varphi_2 = \alpha_1$ from \Cref{automorphisms}, and that
\[
    \langle g_1, g_2, g_3 \mid g_2g_1 = g_1^{-1}g_2, g_3g_1 = g_1g_3, g_3g_2 = g_1g_2^{-1}g_3 \rangle
\]
is a polycyclic presentation of $G_3$. We begin by defining the multiplicative structure of $G_3$.

\begin{lemma} \label[lemma]{a1_exponents}
    Let $G_3$ be the polycyclic group defined in \ref{defG3_a1}. Then the following hold:
    \begin{arabiclist}
        \item \label{a1:1} $g_2^ag_1^b = g_1^{b(-1)^a}g_2^a$
        \item \label{a1:2} $g_3^ag_2^b = g_1^{\mu(b)a}g_2^{b(-1)^a}g_3^a$
        \item \label{a1:3} $(g_1^ag_2^bg_3^c)^m =
        \begin{cases}
            g_1^{ma}g_2^{mb}g_3^{mc} &\text{ for } b, c \text{ even} \\
            g_1^{ma}g_2^{\mu(m)b}g_3^{mc} &\text{ for } b \text{ even, } c \text{ odd} \\
            g_1^{\mu(m)a + \lfloor m/2 \rfloor c (-1)^{m+1}}g_2^{mb}g_3^{mc} &\text{ for } b \text{ odd, } c \text{ even} \\
            g_1^{\mu(m)a + \lfloor m/2 \rfloor c (-1)^{m+1}}g_2^{\mu(m)b}g_3^{mc} &\text{ for } b,c \text{ odd}.
        \end{cases}$
    \end{arabiclist}
\end{lemma}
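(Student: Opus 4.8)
The plan is to handle the three parts in order, exploiting the semidirect-product identity $\varphi_2^{a}(h)=g_3^{a}hg_3^{-a}$ for $h\in G_2$ together with the multiplicative rules for $G_2$ recorded in \Cref{2stepexponents}. Part \ref{a1:1} requires no work: the relation $g_2g_1=g_1^{-1}g_2$ holds verbatim in $G_2\le G_3$, so $g_2^ag_1^b=g_1^{b(-1)^a}g_2^a$ is exactly \Cref{2stepexponents}\ref{2step:2}.

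For Part \ref{a1:2} I would write $g_3^ag_2^b=g_3^ag_2^bg_3^{-a}\,g_3^a=\varphi_2^a(g_2^b)\,g_3^a=\bigl(\varphi_2^a(g_2)\bigr)^b g_3^a$, reducing everything to the computation of $\varphi_2^a(g_2)$. Using $\varphi_2(g_1)=g_1$ and $\varphi_2(g_2)=g_1g_2^{-1}$ (the columns of $\varphi_2=\alpha_1$), I claim $\varphi_2^a(g_2)=g_1^ag_2^{(-1)^a}$ and prove it by induction on $a$; the step splits on the parity of $a$ because of the exponent $(-1)^a$ on $g_2$, and it uses the consequence $g_2g_1^{-1}=g_1g_2$ of the defining relation, with the negative direction handled identically via $\varphi_2^{-1}=\alpha_{-1}$. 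Since $(-1)^a$ is always odd, raising $g_1^ag_2^{(-1)^a}$ to the $b$-th power falls under the second case of \Cref{2stepexponents}\ref{2step:3}, yielding $g_1^{\mu(b)a}g_2^{b(-1)^a}$ and hence the stated formula.

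Part \ref{a1:3} is the substantial one, proved by induction on $m$ with the four parity cases of $(b,c)$ as stated. As in the proof of \Cref{b1_exponents}, the small cases are checked directly and $m<0$ is reduced to $m>0$: I would put $(g_1^ag_2^bg_3^c)^{-1}$ into normal form using Parts \ref{a1:1}, \ref{a1:2} and the commuting pair $g_3g_1=g_1g_3$, observe that the resulting inverse still has the same parities of $(b,c)$ (negation and multiplication by $\pm1$ preserve parity), and then apply the positive-$m$ formula to its $|m|$-th power. For the inductive step with $m>0$, I would multiply the hypothesis $(g_1^ag_2^bg_3^c)^m=g_1^{E}g_2^{F}g_3^{mc}$ on the right by $g_1^ag_2^bg_3^c$, push $g_3^{mc}$ rightward past $g_1^a$ (they commute) and past $g_2^b$ (by Part \ref{a1:2}), move the resulting $g_2$-block past the accumulated $g_1$-power (by Part \ref{a1:1}), and collect exponents.

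The main obstacle will be the $g_1$-exponent bookkeeping in the two cases where $b$ is odd, where $E$ carries the term $\lfloor m/2\rfloor c(-1)^{m+1}$. Here the induction step must itself branch on the parity of $m$: Part \ref{a1:1} introduces a sign $(-1)^m$ when moving the $g_2$-block past $g_1$, and one must check that the accumulated exponent $E+(a+mc)(-1)^m$, with $E=\mu(m)a+\lfloor m/2\rfloor c(-1)^{m+1}$, equals the required $\mu(m+1)a+\lfloor(m+1)/2\rfloor c(-1)^m$ separately for $m$ even and $m$ odd, the delicacy being the behavior of the floor function. A parallel but easier identity $\mu(m)b+b(-1)^m=\mu(m+1)b$ governs the $g_2$-exponent wherever a factor $\mu(m)$ appears there. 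Verifying these two identities in both parity classes, and confirming that the $m=2$ base case is consistent with all four formulas, is the only genuinely delicate part; the remainder is routine normal-form collection.
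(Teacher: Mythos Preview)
The paper omits the proof of this lemma entirely: immediately after stating \Cref{a1_exponents} it moves on to $\Aut(G_3)$, and the authors announce beforehand that ``the proofs of the multiplicative structure were omitted'' for the groups after \ref{defG3_b1}. So there is no paper proof to compare against; the closest thing is the fully written proof of the analogous \Cref{b1_exponents}, whose template your proposal follows faithfully (semidirect-product identity for parts \ref{a1:1}--\ref{a1:2}, then reduction of negative $m$ to positive $m$ followed by induction with a parity split for part \ref{a1:3}).

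Your argument is correct. The claim $\varphi_2^a(g_2)=g_1^{a}g_2^{(-1)^a}$ does hold, and applying \Cref{2stepexponents}\ref{2step:3} with odd $g_2$-exponent gives exactly $g_1^{\mu(b)a}g_2^{b(-1)^a}$. For part \ref{a1:3}, your reduction of $m<0$ to $m>0$ is a bit more delicate here than in \Cref{b1_exponents} because the $g_1$-exponent contains $\lfloor m/2\rfloor$ rather than a linear function of $m$: one must check that the normal form of the inverse, e.g.\ $(g_1^ag_2^bg_3^c)^{-1}=g_1^{a-c}g_2^{-b}g_3^{-c}$ when $b$ is odd and $c$ is even, feeds back through the positive-$m$ formula to reproduce the stated exponent with $m$ replaced by $-l$, using $\lfloor -l/2\rfloor=-\lceil l/2\rceil$. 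This checks out in each parity case, and your inductive step bookkeeping for the $g_1$-exponent (splitting on the parity of $m$) is exactly what is needed.
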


Using the relations defined in \Cref{a1_exponents}, we go on to define $\Aut(G_3)$.

\begin{theorem} \label{a1_automorphisms}
    Let $G_3 = \langle g_1, g_2, g_3 \mid g_2g_1 = g_1^{-1}g_2, g_3g_1 = g_1g_3, g_3g_2 = g_1g_2^{-1}g_3 \rangle$ be the polycyclic group defined in \Cref{defG3_a1}. For $a,b,c,d \in \Z$, set
    \begin{align*}
        \alpha_{a,b,c,d} &=
        \begin{bmatrix}
            1 & a & b \\
            0 & 1 & 2c \\
            0 & 0 & (-1)^d
        \end{bmatrix},
        &\beta_{a,b,c,d} &=
        \begin{bmatrix}
            1 & a & b \\
            0 & -1 & 2c \\
            0 & 0 & (-1)^d
        \end{bmatrix}, \\
        \gamma_{a,b,c,d} &=
        \begin{bmatrix}
            -1 & a & b \\
            0 & 1 & 2c \\
            0 & 0 & (-1)^d
        \end{bmatrix},
        &\delta_{a,b,c,d} &=
        \begin{bmatrix}
            -1 & a & b \\
            0 & -1 & 2c \\
            0 & 0 & (-1)^d
        \end{bmatrix}.
    \end{align*}
    Then
    \begin{align*}
        \Aut(G_3) &= \{\alpha_{a,b,c,d}, \beta_{a,b,c,d}, \gamma_{a,b,c,d}, \delta_{a,b,c,d} \mid a, b, c \in \Z, d \in \Z_2\} \\
        \Inn(G_3) &= \{\alpha_{2a,0,b,0}, \beta_{2a+1,0,b,0}, \gamma_{2a,-1,b,0}, \delta_{2a+1,-1,b,0} \mid a, b \in \Z \} \\
        \Out(G_3) &= \{[\alpha_{0,a,0,0}], [\alpha_{0,a,0,1}], [\alpha_{1,a,0,0}], [\alpha_{1,a,0,1}] \mid a \in \Z \}.
    \end{align*}
    Moreover, the composition table of $\Out(G_3)$ is given by
    \begin{equation*}
        \begin{tabular}{c|c c}
            $\circ$ & $[\alpha_{0,c,0,d}]$ & $[\alpha_{1,c,0,d}]$ \\
            \cline{1-3}
            $[\alpha_{0,a,0,b}]$ & $[\alpha_{0,c+(-1)^da,0,b+d}]$ & $[\alpha_{1,c+(-1)^da,0,b+d}]$ \\
            $[\alpha_{1,a,0,b}]$ & $[\alpha_{1,c+(-1)^da,0,b+d}]$ & $[\alpha_{0,c+(-1)^da,0,b+d}]$
        \end{tabular}
    \end{equation*}
    for $a,c \in \Z$ and $b,d \in \Z_2$.
\end{theorem}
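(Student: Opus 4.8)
The plan is to follow the template established in the proofs of \Cref{b1_automorphisms} and \Cref{a0_automorphisms}. I would begin with a general candidate
\[
    \psi =
    \begin{bmatrix}
        a & d & g \\
        b & e & h \\
        c & f & i
    \end{bmatrix}
    \in \Aut(G_3),
\]
so that $\psi(g_1) = g_1^a g_2^b g_3^c$, $\psi(g_2) = g_1^d g_2^e g_3^f$, and $\psi(g_3) = g_1^g g_2^h g_3^i$, and then impose that $\psi$ preserve each of the three defining relations. Expanding both sides of $\psi(g_2)\psi(g_1) = \psi(g_1)^{-1}\psi(g_2)$ and of $\psi(g_3)\psi(g_1) = \psi(g_1)\psi(g_3)$ into normal words via \Cref{a1_exponents} and equating exponents should force $b = c = f = 0$ together with the parity restrictions that $e$ and $i$ are odd and $h$ is even. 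I then treat the third relation $\psi(g_3)\psi(g_2) = \psi(g_1)\psi(g_2)^{-1}\psi(g_3)$, which is the computational heart of the argument: the alternating-sign and floor terms in \Cref{a1_exponents}\ref{a1:3} must be tracked carefully, and matching the $g_1$- and $g_2$-exponents pins down the remaining congruences relating $a$, $d$, and $g$.

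With the off-diagonal structure fixed, I would apply the three identities $\psi^{-1}(\psi(g_k)) = g_k$ for $k = 1,2,3$, exactly as in the earlier proofs, to conclude $a = a' = \pm 1$, $e = e' = \pm 1$, and $i = i' = \pm 1$, so that $\psi$ is one of $\alpha_{a,b,c,d}$, $\beta_{a,b,c,d}$, $\gamma_{a,b,c,d}$, $\delta_{a,b,c,d}$. A direct verification (again using \Cref{a1_exponents}) that each of these four families genuinely preserves all three relations, together with the explicit computation of their inverses within the same four families, establishes the claimed description of $\Aut(G_3)$.

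For $\Inn(G_3)$ I would compute the conjugation action of a general element $h = g_1^a g_2^b g_3^c$ on each generator, reducing $\iota_h(g_1)$, $\iota_h(g_2)$, $\iota_h(g_3)$ to normal words by means of \Cref{a1_exponents}; reading off the resulting matrix columns identifies $\Inn(G_3)$ with the stated family $\{\alpha_{2a,0,b,0}, \beta_{2a+1,0,b,0}, \gamma_{2a,-1,b,0}, \delta_{2a+1,-1,b,0}\}$. Finally, to obtain $\Out(G_3)$ and its composition table, I would tabulate the products $\iota \circ \psi$ for $\iota \in \Inn(G_3)$ and $\psi$ ranging over the four families, extract a transversal of the cosets, and verify the multiplication rule on that transversal. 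The feature to watch for is the twist $a \mapsto c + (-1)^d a$ in the table: it signals that $\Out(G_3)$ is nonabelian, with the $d$-parameter acting by inversion on the surviving integer parameter while the leading sign $\pm 1$ contributes an independent central factor of order two.

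The main obstacle I anticipate is precisely the bookkeeping in the third relation and in the induced composition law on $\Aut(G_3)$. Because \Cref{a1_exponents}\ref{a1:3} carries both a $\lfloor m/2 \rfloor$ term and a sign $(-1)^{m+1}$, the $g_1$-exponents do not combine additively in the naive way, and it is exactly these terms that propagate into the nontrivial $(-1)^d$ twist appearing in the outer composition table; keeping the parities of the intermediate exponents straight throughout these reductions is the delicate point.
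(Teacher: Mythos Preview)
Your plan matches the paper's proof essentially step for step: set up a general $\psi$ and $\psi^{-1}$, use preservation of the three relations together with the normal-form rules of \Cref{a1_exponents} to force $b=c=f=0$ and the parity constraints, invoke $\psi^{-1}\psi=\mathrm{id}$ to pin the diagonal to $\pm 1$, verify the four families and their inverses, then compute $\Inn(G_3)$ by conjugation and read off $\Out(G_3)$ and its multiplication. One small caveat: the first two relations alone yield $b=c=0$, $e$ odd, and $h$ even, but $f=0$ and $i$ odd actually fall out of the third relation (compare the $g_3$- and $g_2$-exponents there), so expect that constraint to surface a bit later than you indicated.
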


\begin{proof}
    Let
    \[
        \psi =
        \begin{bmatrix}
            a & d & g \\
            b & e & h \\
            c & f & i
        \end{bmatrix}
        \in \Aut(G_3)
        \quad \text{ with } \quad
        \psi^{-1} =
        \begin{bmatrix}
            a' & d' & g' \\
            b' & e' & h' \\
            c' & f' & i'
        \end{bmatrix}.
    \]
    As the relations of $G_3$ must be preserved, we obtain $b, b', c, c', f, f' = 0$, $e, e', i, i'$ are odd, and $h, h'$ are even. From the identities $g_1 = \psi(\psi^{-1}(g_1))$, $g_2 = \psi(\psi^{-1}(g_2))$, and $g_3 = \psi(\psi^{-1}(g_3))$, we also obtain $a = a' = \pm 1$, $e = e' = \pm 1$, and $i = i' = \pm 1$. It also follows that if $a = 1$ then $d' = -d$ while if $a = -1$ then $d' = d$, if $e = 1$ then $h' = -h$ while if $e = -1$ then $h' = h$, and if $a = i$ then $g' = -g$, while if $a \neq i$ then $g' = g$. Thus, for $a, b, c, d \in \Z$, we must have that $\psi \in \{\alpha_{a,b,c,d}, \beta_{a,b,c,d}, \gamma_{a,b,c,d}, \delta_{a,b,c,d}\}$.
    
    Further, it is easy to verify that $\alpha_{a,b,c,d}, \beta_{a,b,c,d}, \gamma_{a,b,c,d}, \delta_{a,b,c,d} \in \Aut(G_3)$ for all $a, b, c, d \in \Z$, and that
    \begin{align*}
        \alpha_{a,b,c,d_1}^{-1} &= \alpha_{-a,-b,-c,d_1},
        &\alpha_{a,b,c,d_2}^{-1} &= \alpha_{-a,b,-c,d_2},
        &\beta_{a,b,c,d_1}^{-1} &= \beta_{-a,-b,c,d_1},
        &\beta_{a,b,c,d_2}^{-1} &= \beta_{-a,b,c,d_2}, \\
        \gamma_{a,b,c,d_1}^{-1} &= \gamma_{a,b,-c,d_1},
        &\gamma_{a,b,c,d_2}^{-1} &= \gamma_{a,-b,-c,d_2},
        &\delta_{a,b,c,d_1}^{-1} &= \delta_{a,b,c,d_1},
        &\delta_{a,b,c,d_2}^{-1} &= \delta_{a,-b,c,d_2}
    \end{align*}
    where $d_1$ is even and $d_2$ is odd.
    
    To identify $\Inn(G_3)$, let $\iota_h \in \Inn(G_3)$ where $h = g_1^ag_2^bg_3^c$ for some $a, b, c \in \Z$. It follows that
    \begin{align*}
        g_1^ag_2^bg_3^cg_1(g_1^ag_2^bg_3^c)^{-1}
        &= g_1^{(-1)^b} \\
        g_1^ag_2^bg_3^cg_2(g_1^ag_2^bg_3^c)^{-1}
        &= g_1^{2a+c(-1)^b}g_2^{(-1)^c} \\
        g_1^ag_2^bg_3^cg_3(g_1^ag_2^bg_3^c)^{-1}
        &= g_1^{-\mu(b)}g_2^{2b}g_3.
    \end{align*}
    Therefore, $\Inn(G_3) = \{\alpha_{2a,0,b,0}, \beta_{2a+1,0,b,0}, \gamma_{2a,-1,b,0}, \delta_{2a+1,-1,b,0}\}$ and it can be verified that for any $a,b,c,d,e,f \in \Z$,
    \begin{equation*}
        {\scriptsize
        \begin{tabular}{c|c c c c}
            $\circ$ & $\alpha_{c,d,e,f}$ & $\beta_{c,d,e,f}$ & $\gamma_{c,d,e,f}$ & $\delta_{c,d,e,f}$ \\
            \cline{1-5}
            $\alpha_{2a,0,b,0}$ & $\alpha_{2a+c,d,b+e,f}$ & $\beta_{2a+c,d,b+e,f}$ & $\gamma_{2a+c,d,b+e,f}$ & $\delta_{2a+c,d,b+e,f}$ \\
            $\beta_{2a+1,0,b,0}$ & $\beta_{2a+c+1,d,b-e,f}$ & $\alpha_{2a+c+1,d,b-e,f}$ & $\delta_{2a+c+1,d,b-e,f}$ & $\gamma_{2a+c+1,d,b-e,f}$ \\
            $\gamma_{2a,-1,b,0}$ & $\gamma_{2a-c,-d-(-1)^f,b+e,f}$ & $\delta_{2a-c,-d-(-1)^f,b+e,f}$ & $\alpha_{2a-c,-d-(-1)^f,b+e,f}$ & $\beta_{2a-c,-d-(-1)^f,b+e,f}$ \\
            $\delta_{2a+1,-1,b,0}$ & $\delta_{2a-c+1,d-(-1)^f,b-e,f}$ & $\gamma_{2a-c+1,d-(-1)^f,b-e,f}$ & $\beta_{2a-c+1,d-(-1)^f,b-e,f}$ & $\alpha_{2a-c+1,d-(-1)^f,b-e,f}$
        \end{tabular},
        }
    \end{equation*}
    and thus
    \[
        \Out(G_3) = \{[\alpha_{0,a,0,0}], [\alpha_{0,a,0,1}], [\alpha_{1,a,0,0}], [\alpha_{1,a,0,1}] \}
    \]
    with composition structure
    \begin{equation*}
        \begin{tabular}{c|c c}
            $\circ$ & $[\alpha_{0,c,0,d}]$ & $[\alpha_{1,c,0,d}]$ \\
            \cline{1-3}
            $[\alpha_{0,a,0,b}]$ & $[\alpha_{0,c+(-1)^da,0,b+d}]$ & $[\alpha_{1,c+(-1)^da,0,b+d}]$ \\
            $[\alpha_{1,a,0,b}]$ & $[\alpha_{1,c+(-1)^da,0,b+d}]$ & $[\alpha_{0,c+(-1)^da,0,b+d}]$
        \end{tabular}
    \end{equation*}
    where $a,c \in \Z$ and $b,d \in \Z_2$.
\end{proof}

\chunk{\label{defG3_b0}}
Consider
\begin{align*}
    G_1 &= \Z \\
    G_2 &= G_1 \rtimes_{\overline{\varphi}_1} \Z
    \quad \text{with} \quad
    \varphi_1 = [-1] \\
    G_3 &= G_2 \rtimes_{\overline{\varphi}_2} \Z
    \quad \text{with} \quad
    \varphi_2 = \begin{bmatrix} -1 & 0 \\ 0 & 1 \end{bmatrix}.
\end{align*}
Note that $G_3$ is well-defined as $\varphi_2 = \beta_0$ from \Cref{automorphisms}, and that
\[
    \langle g_1, g_2, g_3 \mid g_2g_1 = g_1^{-1}g_2, g_3g_1 = g_1^{-1}g_3, g_3g_2 = g_2g_3 \rangle
\]
is a polycyclic presentation of $G_3$. We begin by defining the multiplicative structure of $G_3$.

\begin{lemma} \label[lemma]{b0_exponents}
    Let $G_3$ be the polycyclic group defined in \ref{defG3_b0}. Then the following hold:
    \begin{arabiclist}
        \item \label{b0:1} $g_i^ag_1^b = g_1^{b(-1)^a}g_i^a$ for $i = 2,3$
        \item \label{b0:2} $(g_1^ag_2^bg_3^c)^m =
        \begin{cases}
            g_1^{ma}g_2^{mb}g_3^{mc} &\text{for } b+c \text{ even} \\
            g_1^{\mu(m)a}g_2^{mb}g_3^{mc} &\text{for } b+c \text{ odd}.
        \end{cases}$
    \end{arabiclist}
\end{lemma}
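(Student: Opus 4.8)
The plan is to follow the same template as \Cref{2stepexponents} and \Cref{b1_exponents}: first read off the conjugation action of $\varphi_2$, then bootstrap the two commutation rules in \ref{b0:1}, and finally prove the power formula \ref{b0:2} by reducing to nonnegative $m$ and inducting.

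For \ref{b0:1} I would start from the fact that $\varphi_2 \in \Aut(G_2)$ satisfies $\varphi_2(g_1) = g_3 g_1 g_3^{-1} = g_1^{-1}$ and $\varphi_2(g_2) = g_3 g_2 g_3^{-1} = g_2$. The case $i = 2$ is immediate from \Cref{2stepexponents} \ref{2step:2}, since $g_2 g_1 g_2^{-1} = g_1^{-1}$ is the defining relation of the base $G_2$. For $i = 3$ I would copy the one-line computation used there, writing $g_3^a g_1^b = g_3^a g_1^b g_3^{-a} g_3^a = \varphi_2^a(g_1^b) g_3^a = (g_1^{(-1)^a})^b g_3^a$, using $\varphi_2^a(g_1) = g_1^{(-1)^a}$. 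It is worth recording as a byproduct that $\varphi_2(g_2) = g_2$ forces $g_3^c g_2^b = g_2^b g_3^c$, so $g_2$ and $g_3$ commute and $(g_2^b g_3^c)^m = g_2^{mb} g_3^{mc}$.

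The key structural observation for \ref{b0:2} is that, writing $h = g_2^b g_3^c$, both $g_2$ and $g_3$ invert $g_1$, so by \ref{b0:1} conjugation by $h$ acts on $g_1$ through the single sign $\epsilon \coloneqq (-1)^{b+c}$, i.e. $h g_1 h^{-1} = g_1^{\epsilon}$. Hence for $m \geq 0$ one may collect every $g_1$-power to the front, and a short induction on $m$ gives
\[
    (g_1^a h)^m = g_1^{a(1 + \epsilon + \cdots + \epsilon^{m-1})} h^m = g_1^{a \sum_{j=0}^{m-1} \epsilon^j}\, g_2^{mb} g_3^{mc}.
\]
Evaluating the geometric sum finishes the nonnegative case: when $b + c$ is even we have $\epsilon = 1$ and the exponent is $ma$, while when $b + c$ is odd we have $\epsilon = -1$ and the alternating sum equals $\mu(m)$, matching the two branches of \ref{b0:2} exactly.

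For negative $m$ I would reduce to the previous step just as in \Cref{b1_exponents}: compute the inverse in normal form using \ref{b0:1} and the commutation of $g_2, g_3$, obtaining $(g_1^a g_2^b g_3^c)^{-1} = g_1^{-\epsilon a} g_2^{-b} g_3^{-c}$, whose $g_2, g_3$-exponents have the same parity sum and hence the same $\epsilon$. Applying the nonnegative case to $(g_1^a g_2^b g_3^c)^{-l} = \big((g_1^a g_2^b g_3^c)^{-1}\big)^{l}$ and using $\mu(l) = \mu(-l)$ then reproduces the stated formula for $m = -l < 0$; the case $m = 0$ is trivial since $\mu(0) = 0$. I expect the only genuine bookkeeping hazard to be this negative-exponent reduction — specifically, confirming that inverting the word flips the $g_1$-exponent by exactly the factor $\epsilon$ and that this is consistent with $\mu$ depending only on parity — rather than any real difficulty, since the geometric-sum collapse is what makes the two parity branches fall out cleanly.
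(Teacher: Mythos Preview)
Your argument is correct. The paper actually omits the proof of this lemma entirely (it states that ``the proofs of the multiplicative structure were omitted'' for the groups after \ref{defG3_b1}), so there is no paper proof to compare against directly; the intended template is the case-by-case induction used in \Cref{2stepexponents} and \Cref{b1_exponents}.

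Compared with that template, your approach is a mild but genuine streamlining. In the analogous proofs the paper splits into parity cases for $b$, $c$, and $m$ and inducts separately in each, whereas you package the conjugation action of $h=g_2^bg_3^c$ on $g_1$ into the single sign $\epsilon=(-1)^{b+c}$ and collapse the induction into the geometric sum $\sum_{j=0}^{m-1}\epsilon^{j}$, from which the two branches of \ref{b0:2} drop out by evaluating at $\epsilon=\pm1$. This buys you a single induction and a uniform treatment of the negative-$m$ reduction; the paper's style has the minor advantage of being entirely mechanical and matching the format of the other lemmas. The only point worth tightening in your write-up is to make the inductive step $h^m g_1^a=g_1^{a\epsilon^m}h^m$ explicit before summing, since that is where \ref{b0:1} and the commutation $g_3g_2=g_2g_3$ actually get used.
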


Using the relations defined in \Cref{b0_exponents}, we go on to define $\Aut(G_3)$.

\begin{theorem} \label{b0_automorphisms}
    Let $G_3 = \langle g_1, g_2, g_3 \mid g_2g_1 = g_1^{-1}g_2, g_3g_1 = g_1^{-1}g_3, g_3g_2 = g_2g_3 \rangle$ be the $3$-step poly-$\Z$ group defined in \Cref{defG3_b0}. Set
    \begin{align*}
        \mathcal{A} &= \left\{
            A \left|
            A = 
            \begin{bmatrix}
                2k & 2m+1 \\
                2l+1 & 2n
            \end{bmatrix}
            \right.
            \in \mathrm{GL}(2, \Z) \text{ where } k,l,m,n \in \Z
            \right\}, \\
        \mathcal{B} &= \left\{
            B \left|
            B =
            \begin{bmatrix}
                2k+1 & 2m \\
                2l & 2n+1
            \end{bmatrix}
            \right.
            \in \mathrm{GL}(2, \Z) \text{ where } k,l,m,n \in \Z
        \right\},
    \end{align*}
    and for $a \in \Z$ and $M \in \mathcal{A}\cup \mathcal{B}$, set
    \begin{align*}
        \alpha_{a, M} &=
        \left[
            \begin{array}{cc}
                1 & \begin{matrix} a & a \end{matrix} \\
                \begin{matrix} 0 \\ 0 \end{matrix} & M
            \end{array}
        \right],
        &\beta_{a, M} &=
        \left[
            \begin{array}{cc}
                -1 & \begin{matrix} a & a \end{matrix} \\
                \begin{matrix} 0 \\ 0 \end{matrix} & M
            \end{array}
        \right].
    \end{align*}
    Then
    \begin{align*}
        \Aut(G_3) &= \{\alpha_{a,A}, \alpha_{a,B}, \beta_{a,A}, \beta_{a,B} \mid a \in \Z, A \in \mathcal{A}, B \in \mathcal{B}\} \\
        \Inn(G_3) &= \{\alpha_{2a, I_2}, \beta_{2a, I_2} \mid a \in \Z\} \\
        \Out(G_3) &= \{[\alpha_{0,A}], [\alpha_{1,A}], [\alpha_{0,B}], [\alpha_{1,B}] \mid A \in \mathcal{A}, B \in \mathcal{B}\}.
    \end{align*}
    Moreover, the composition table of $\Out(G_3)$ is given by
    \begin{equation*}
        \begin{tabular}{c | c c}
            $\circ$ & $[\alpha_{0,M'}]$ & $[\alpha_{1,M'}]$  \\
            \cline{1-3}
            $[\alpha_{0,M}]$ & $[\alpha_{0,MM'}]$ & $[\alpha_{1,MM'}]$ \\
            $[\alpha_{1,M}]$ & $[\alpha_{1,MM'}]$ & $[\alpha_{0,MM'}]$
        \end{tabular}
    \end{equation*}
    for $M, M' \in \mathcal{A}\cup\mathcal{B}$.
\end{theorem}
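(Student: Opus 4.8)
The plan is to follow the strategy already used for \Cref{b1_automorphisms}, exploiting that the present $G_3$ differs from the group in \ref{defG3_b1} only in that $g_2$ and $g_3$ now commute. I would start from an arbitrary candidate
\[
    \psi =
    \begin{bmatrix}
        a & d & g \\
        b & e & h \\
        c & f & i
    \end{bmatrix}
    \in \Aut(G_3)
\]
together with $\psi^{-1}$ written in the same way with primed entries, and demand that each defining relation be preserved, rewriting both sides into normal form via \Cref{b0_exponents}. Preserving $\psi(g_2)\psi(g_1) = \psi(g_1)^{-1}\psi(g_2)$ and $\psi(g_3)\psi(g_1) = \psi(g_1)^{-1}\psi(g_3)$ should force $b = c = 0$, so that $\psi(g_1) = g_1^a$; the identity $\psi(\psi^{-1}(g_1)) = g_1$ then gives $aa' = 1$, whence $a = \pm 1$. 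With $a \neq 0$, comparing $g_1$-exponents in these two relations yields $a(-1)^{e+f} = -a$ and $a(-1)^{h+i} = -a$, so both $e+f$ and $h+i$ are odd.

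Next I would extract the remaining constraints. The clean novelty of this case is the relation $\psi(g_3)\psi(g_2) = \psi(g_2)\psi(g_3)$: moving the $g_1$-powers to the left using \Cref{b0_exponents}\ref{b0:1} and the parities just found, and using that $g_2, g_3$ commute, the $g_2$- and $g_3$-exponents match automatically while the $g_1$-exponents compare as $g - d$ against $d - g$, forcing $d = g$. The conditions that $e+f$ and $h+i$ are odd, together with $\det\begin{bmatrix} e & h \\ f & i \end{bmatrix} = \pm 1$, leave exactly two admissible parity patterns for $M := \begin{bmatrix} e & h \\ f & i \end{bmatrix}$, since the other two patterns force an even determinant; hence $M \in \mathcal{A}\cup\mathcal{B}$. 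Combined with $a = \pm 1$ and $d = g$, this shows every $\psi$ is one of $\alpha_{d,M}$ or $\beta_{d,M}$. I would then record the converse by a direct check, verifying that all $\alpha_{a,M}, \beta_{a,M}$ are automorphisms and computing their inverses.

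Finally, to identify $\Inn(G_3)$ I would conjugate each generator by $h = g_1^a g_2^b g_3^c$: using that $g_2, g_3$ commute and \Cref{b0_exponents}\ref{b0:1}, I expect $h g_1 h^{-1} = g_1^{(-1)^{b+c}}$, $h g_2 h^{-1} = g_1^{2a} g_2$, and $h g_3 h^{-1} = g_1^{2a} g_3$, so that $\iota_h$ equals $\alpha_{2a, I_2}$ or $\beta_{2a, I_2}$ according to the parity of $b+c$, giving the claimed $\Inn(G_3)$. Passing to $\Out(G_3) = \Aut(G_3)/\Inn(G_3)$, every $\beta_{a,M}$ becomes inner-equivalent to an $\alpha$ (since $\beta_{0,I_2} \in \Inn(G_3)$) and the top-right parameter survives only modulo $2$, leaving the four coset families $[\alpha_{0,A}], [\alpha_{1,A}], [\alpha_{0,B}], [\alpha_{1,B}]$; the displayed composition table then follows from the full composition law on $\Aut(G_3)$. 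The main obstacle is precisely this bookkeeping of $g_1$-exponents: because the sign picked up when commuting a $g_1$-power past $\psi(g_2)$ or $\psi(g_3)$ depends on the parities of the entries of $M$, the composition $\alpha_{a,M}\circ\alpha_{a',M'}$ must be tracked carefully to confirm both that $MM'$ again lies in $\mathcal{A}\cup\mathcal{B}$ and that the surviving parameter reduces to $a+a' \pmod 2$, as the $\Out$ table asserts.
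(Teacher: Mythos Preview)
Your proposal is correct and follows essentially the same approach as the paper's proof: set up a generic $\psi$, force $b=c=0$ and the parity constraints on the lower $2\times 2$ block from the first two relations, use the commuting relation to get $d=g$, invoke invertibility to pin down $a=\pm 1$ and $M\in\mathcal{A}\cup\mathcal{B}$, then compute $\Inn(G_3)$ by direct conjugation. The only minor deviation is that you rule out the two bad parity patterns for $M$ via the determinant being odd, whereas the paper instead appeals to the parity constraints on the entries of $\psi^{-1}$; both arguments are equivalent and equally short.
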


\begin{proof}
    Let
    \[
        \psi =
        \begin{bmatrix}
            a & d & g \\
            b & e & h \\
            c & f & i
        \end{bmatrix}
        \in \Aut(G_3)
        \quad \text{ with } \quad
        \psi^{-1} =
        \begin{bmatrix}
            a' & d' & g' \\
            b' & e' & h' \\
            c' & f' & i'
        \end{bmatrix}.
    \]
    
    As the relations of $G_3$ must be preserved, we obtain $b, b', c, c' = 0$ and $e+f, e'+f', h+i, h'+i'$ are odd. From the identities $g_1 = \psi(\psi^{-1}(g_1))$, $g_2 = \psi(\psi^{-1}(g_2))$, and $g_3 = \psi(\psi^{-1}(g_3))$, we obtain $a = a' = \pm 1$, and we know that $e, e', i, i'$ must have the same parity, which is the opposite of that of $f, f', h, h'$. It also follows that if $a=1$ then $d'=-d$, while if $a=-1$ then $d'=d$. Thus, letting
    \[
        A =
        \begin{bmatrix}
            2k & 2m+1 \\
            2l+1 & 2n
        \end{bmatrix},
        B =
        \begin{bmatrix}
            2k+1 & 2m \\
            2l & 2n+1
        \end{bmatrix}
    \]
    such that $k,l,m,n \in \Z$ with $A,B \in \mathrm{GL}(2, \Z)$, we have that $\psi \in \{ \alpha_{d,A}, \alpha_{d,B}, \beta_{d,A}, \beta_{d,B}\}$.
    
    Further, it is easy to check that $\alpha_{a,A}, \alpha_{a,B}, \beta_{a,A}, \beta_{a,B} \in \Aut(G_3)$ for all $a \in \Z$ and $A,B$ as defined above, and that
    \[
        \alpha_{a,M}^{-1} = \alpha_{-a,M^{-1}},
        \quad
        \beta_{a,M}^{-1} = \beta_{a,M^{-1}}
    \]
    for any $M \in \mathrm{GL}(2,\Z)$ such that $M$ is of either form $A$ or $B$ as defined above.
    
    To identify $\Inn(G_3)$, let $\iota_h \in \Inn(G_3)$ where $h = g_1^ag_2^bg_3^c$ for some $a, b, c \in \Z$. It follows that
    \begin{align*}
        g_1^ag_2^bg_3^cg_1(g_1^ag_2^bg_3^c)^{-1}
        &= g_1^{(-1)^{b+c}} \\
        g_1^ag_2^bg_3^cg_2(g_1^ag_2^bg_3^c)^{-1}
        &= g_1^{2a}g_2 \\
        g_1^ag_2^bg_3^cg_3(g_1^ag_2^bg_3^c)^{-1}
        &= g_1^{2a}g_3.
    \end{align*}
    Therefore, $\Inn(G_3) = \{\alpha_{2a,I_2}, \beta_{2a,I_2} \mid a \in \Z\}$ and it can be verified that for any $a, b \in \Z$ and any $M \in \{A,B\}$ as defined above,
    \begin{equation*}
        \begin{tabular}{c|c c}
            $\circ$ & $\alpha_{b,M}$ & $\beta_{b,M}$ \\
            \cline{1-3}
            $\alpha_{a,I_2}$ & $\alpha_{2a+b,M}$ & $\beta_{2a+b,M}$ \\
            $\beta_{a,I_2}$ & $\beta_{2a-b,M}$ & $\alpha_{2a-b,M}$
        \end{tabular},
    \end{equation*}
    and thus $\Out(G_3) = \langle [\alpha_{0,A}], [\alpha_{1,A}], [\alpha_{0,B}], [\alpha_{1,B}]\rangle$ with composition structure
    \begin{equation*}
        \begin{tabular}{c | c c}
            $\circ$ & $[\alpha_{0,M'}]$ & $[\alpha_{1,M'}]$  \\
            \cline{1-3}
            $[\alpha_{0,M}]$ & $[\alpha_{0,MM'}]$ & $[\alpha_{1,MM'}]$ \\
            $[\alpha_{1,M}]$ & $[\alpha_{1,MM'}]$ & $[\alpha_{0,MM'}]$
        \end{tabular}
    \end{equation*}
    for $M, M' \in \{A, B\}$.
\end{proof}

\section{Acknowledgements}
I would like to thank Dr. Oana Veliche both for working with me as my thesis advisor and for continuing to advise me while I further developed this project. I would also like to thank Professors Anthony Iarrobino, Gordana Todorov, Bettina Eick, Alexandru Suciu, and Martin Kreuzer for their helpful suggestions and comments.

\end{document}